\pgfplotsset{compat=newest}
\definecolor{green}{rgb}{0,0.5,0}
\newtheorem{theorem}{Theorem}[section]
\newtheorem{lemma}[theorem]{Lemma}
\newtheorem{remark}[theorem]{Remark}
\numberwithin{equation}{section}
\renewcommand{\d}{\operatorname{d}\!}
\newcommand{\Vbfm}{\mathbf{V}}
\newcommand{\on}[1]{\operatorname{#1}}
\newcommand{\drm}{\mathrm{d}}
\newcommand{\nbfm}{\mathbf{n}}
\newcommand{\Zbfm}{\mathbf{Z}}
\newcommand{\xbfm}{\mathbf{x}}
\newcommand{\ybfm}{\mathbf{y}}
\newcommand{\kabfm}{\boldsymbol{\kappa}} 
\newcommand{\xibfm}{\boldsymbol{\xi}}
\newcommand{\ALEoV}[2]{\mathbf{T}(#1,\mathbf{#2})}
\newcommand{\red}[1]{\textcolor{red}{#1}}
\newcommand{\V}{\mathcal{V}}
\newcommand{\K}{\mathcal{K}}
\begin{document}
%===========================================================
\title[Solution of a time-dependent shape optimization problem]
{On the numerical solution of a time-dependent shape 
optimization problem for the heat equation}
\author{Rahel Br\"ugger}
\author{Helmut Harbrecht}
\author{Johannes Tausch}
\address{Rahel Br\"ugger, Helmut Harbrecht,
Departement Mathematik und Informatik,
Universit\"at Basel,
Spiegelgasse 1, 4051 Basel, Schweiz.}
\email{\{ra.bruegger,helmut.harbrecht\}@unibas.ch}
\address{Johannes Tausch, 
Department of Mathematics, 
Southern Methodist University, Dallas, TX 75275.}
\email{tausch@smu.edu}
\date{}
\begin{abstract}
  This article is concerned with the solution of a time-dependent
  shape identification problem. Specifically we consider the heat
  equation in a domain, which contains a time-dependent inclusion of
  zero temperature. The objective is to detect this inclusion from the
  given temperature and heat flux at the exterior boundary of the
  domain.  To this end, for a given temperature at the exterior
  boundary, the mismatch of the Neumann data is minimized.  This
  time-dependent shape optimization problem is then solved by a
  gradient-based optimization method. Numerical results are presented
  which validate the present approach.
\end{abstract}

\keywords{Inverse problem, shape optimization, heat equation}
%\subjclass[2010]{35R35, 35N25, 65C05, 65N75}
\maketitle
%===========================================================
\section{Introduction}
%===========================================================
Shape optimization appears in a wide range of problems from
engineering, especially for designing and constructing industrial 
components or in non-destructive testing. Many practical problems 
from engineering amount to partial differential equations for an unknown 
function, which needs to be computed to obtain the quantity of interest. 
Shape optimization is then concerned with the minimization of this quantity 
of interest. While shape optimization in case of \emph{elliptic partial 
differential equations\/} is a well studied topic in literature, see for example 
\cite{Delfour_Zolesio,Soko_Zolesio} and the references therein, not so 
much is known about shape optimization in case of \emph{parabolic
partial differential equations\/}.

Theoretical results for parabolic shape optimization problems with 
\emph{time-independent shapes\/} can be found in \cite{sokolowski1988shape, Soko_Zolesio, El_Yacoubi}, while practical results are found 
for example in \cite{Chapko_et_al,chapko_neumann, HHTauschCat}. 
This is in contrast to the results for parabolic shape optimization 
problems with \emph{time-dependent shapes\/}. Theoretical results 
are for example available in \cite{Dziri_dynamical,Dziri,Moubachir}, 
but to the best of our knowledge, no results about efficient computations 
of such time-dependent shape optimization problems exist.

This article is based on the previous article \cite{HHTauschCat} by
two of the authors, where a parabolic shape optimization problem 
is considered for a time-independent shape. The goal therein was 
to detect a fixed inclusion or void of zero temperature inside a 
three-dimensional solid or liquid body by measurements of the 
temperature and the transient heat flux at the accessible outer 
boundary. Since the underlying shape calculus turned out to be 
rather standard due to the stationarity of the inclusion, the focus 
has been on the development of an efficient solver for the underlying 
heat equation. In contrast, in the present article, we now consider an 
inclusion, which changes its shape during time. Therefore, the shape 
calculus becomes the focus, while the numerical experiments are 
performed in two space dimensions and serve as a proof of concept.

The problem under consideration is reformulated as a shape optimization 
problem by means of a tracking-type functional for the Neumann data.
Therefore, for given temperature at the exterior boundary, the mismatch
of the Neumann data is minimized in a least-squares sense. Since we 
intend to apply a gradient-based optimization algorithm, we compute the 
shape gradient of this functional by means of the adjoint approach, which 
is known to reduce the computational effort. Then, we make a parametric 
ansatz for the inclusion and use a boundary element method to solve
the heat equations for the primal state and the adjoint state. Numerical
results validate that the present approach is feasible, leading to meaningful
reconstructions.

The remainder of the article is organized as follows. In 
Section~\ref{sec.problem_formulation}, we state the problem under 
consideration. Section~\ref{sec.shape_calc_neumann} is dedicated 
to the time-dependent shape calculus of our functional. 
Section~\ref{sec.discretization_shape_opt} shows how we can 
discretize our problem in the case of a void which is star-shaped
for all points of time. In order to solve the heat equation on the current 
domain, Section~\ref{sec.solving_parabolic_BVP} explains how to do 
this by using a boundary element method. Since the method parallels
that of \cite{HHTauschCat}, this section only discusses the changes 
for the moving boundaries considered in this article. In order to illustrate 
the developed techniques, they are applied to the example shown
in Section~\ref{sec.results}. Finally, in Section \ref{sec.conclusion}, 
we give some concluding remarks.

%===========================================================
\section{Problem formulation}\label{sec.problem_formulation}
%===========================================================
\subsection{Model problem}
%===========================================================
Let $D \subset \mathbb{R} ^d$ with $d = 2,3$ be a simply connected, 
spatial domain with boundary $\Gamma^f = \partial D$. Moreover, we 
have a time component, and thus the domain $(0,T)\times D$ forms a 
cylindrical domain, called the space-time cylinder. At every time 
$t\in[0,T]$, a simply connected subdomain $S_t \subset D$ with boundary 
$\Gamma_t = \partial S_t$ lies inside $D$ such that it holds $\on{dist}
(\Gamma^f,\Gamma_t) > 0$ for all $t$. The difference domain is called 
$\Omega_t := D\setminus\overline{S_t}$. Taking into account the time 
again, we thus consider tubes (i.e., non-cylindrical domains), which 
contain a void and are represented as
\[
Q_T= \underset{0 < t< T}{\bigcup} (\{ t \} \times \Omega_t).
\]
The interior boundary of the tube $Q_T$ is called
\[ 
  \Sigma_T = \underset{0 < t < T}{\bigcup} ( \{t \} \times\Gamma_t ) 
\]
and the exterior boundary of the tube is called $\Sigma^f = (0,T) 
\times\Gamma^f$.\footnote{We assume that the exterior boundary 
$\Gamma^f$ does not depend on time, but this is no necessity for 
the shape calculus presented in the subsequent chapter.} The topological 
setup is illustrated in Figure~\ref{fig.setting_cat}. It is in analogy 
to \cite{HHTauschCat}, but we consider an interior boundary 
$\Gamma_t$ which moves in time instead of a fixed, 
interior boundary $\Gamma_0$. 

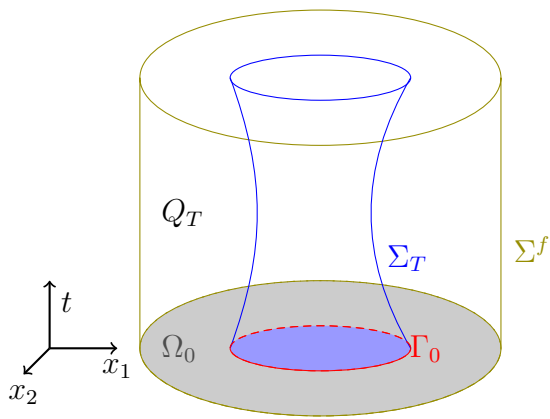
\begin{figure}[htb]
\centering
\begin{tikzpicture}[scale=0.6]
\filldraw[fill=black!20!white, draw=olive, densely dashed] (0,0) circle (4cm and 1.5cm);
\filldraw[fill=blue!40!white, draw=red, densely dashed] (0,0) circle (2cm and 0.5cm);

\draw[olive] (-4,6) -- (-4,0) arc (180:360:4cm and 1.5cm)-- (4,6) ++(-4,0) circle (4cm and 1.5cm);
\draw[densely dashed, olive] (-4,0) arc (180:0:4cm and 1.5cm);

\draw[bend left=20,blue] (-2,6) to node [auto] {} (-2,0);
\draw[red] (-2,0) arc (180:360:2cm and 0.5cm);
\draw[bend left=30, blue] (2,0) to node [auto] {} (2,6);
\draw[blue] (2,6) ++ (-2,0) circle (2cm and 0.5cm);
\draw[densely dashed, red] (-2,0) arc (180:0:2cm and 0.5cm);

\node at (1,2) (0,0mm) [label={[blue]right:$\Sigma_T$}]{};
\node at (3.8,2.2) (0,0mm) [label={[olive]right:$\Sigma^f$}]{};
\node at (1.5,0) (0,0mm) [label={[red]right:$\Gamma_0$}]{};
\node at (-4,0) (0,0mm) [label={[black!70!white]right:$\Omega_0$}]{};
\node at (-4,3) (0,0mm) [label={[black]right:$Q_T$}]{};

\begin{scope}[shift={(-6,0)}]
\draw[black, thick,->] (0,0,0) -- (1.5,0,0) node[anchor=north]{$x_1$};
\draw[black, thick,->] (0,0,0) -- (0,1.5,0) node[anchor=north west]{$t$};
\draw[black, thick,->] (0,0,0) -- (0,0,1.5) node[anchor=north]{$x_2$}; 
\end{scope}
\end{tikzpicture}
\caption{\label{fig.setting_cat}The tube $Q_T$ with the 
boundaries $\Sigma_T$ and $\Sigma^f$ for $d=2$.}
\end{figure}

For every time step $t$, we assume to have a smooth $C^2$-diffeomorphism 
$\boldsymbol\kappa$, which maps the initial domain $\Omega_0$ onto 
the time-dependent domain $\Omega_t$. In accordance with 
\cite{Moubachir}, we write
\begin{equation}\label{eq.tube_param}
%=========================================
\boldsymbol\kappa: [0, T ] \times \mathbb{R}^d \to \mathbb{R}^d, 
	\quad (t,\xbfm) \mapsto \boldsymbol\kappa(t, \xbfm)
\end{equation}
to emphasize the dependence of the mapping $\boldsymbol\kappa$ on 
the time, where we have $\boldsymbol\kappa(t,\Omega_0) = \Omega_t$. 
Here, $\boldsymbol\kappa \in C^2([0,T]\times \mathbb{R}^d)$ and,
as in \cite[pg.~826]{harbrecht_domain_map}, we assume the uniformity condition
\begin{equation}
\label{eq.uniformity_cond}
\|\boldsymbol\kappa(t, \xbfm) \|_{C^2( [0,T]\times \mathbb{R}^d ; \mathbb{R}^d)}, \|\boldsymbol\kappa(t, \xbfm)^{-1} \|_{C^2( [0,T] \times \mathbb{R}^d ; \mathbb{R}^d)} \leq C_{\kabfm}
\end{equation}
for some constant $C_{\kabfm} \in (0, \infty)$. To reduce the
technical level of the ensuing discussion, we assume
that $\Omega_0$ has $C^2$-smooth boundaries which implies that
the boundaries of $\Omega_t$ have the same regularity.

\begin{remark}
\label{rem.singular_val}
Notice that, due to the uniformity condition \eqref{eq.uniformity_cond}, we have as in \cite{harbrecht_domain_map}
\[ 0 < \underline{\sigma} \leq \min \lbrace \sigma (\on{D} \boldsymbol{\kappa}) \rbrace \leq \max \lbrace \sigma (\on{D} \boldsymbol{\kappa}) \rbrace \leq \overline{\sigma} < \infty,
\]
where $\sigma(.)$ denote the singular values. 
Moreover, as in \cite[Remark 1, pg.~827]{harbrecht_domain_map}, 
we assume $\det (\on{D} \boldsymbol{\kappa})$ to be positive. The smoothness
of the mapping also implies that the time 
derivative $\partial_t \boldsymbol{\kappa}$ is uniformly bounded.
\end{remark}

We shall consider the following, overdetermined initial boundary 
value problem for the heat equation, where $f$ and $g$ are defined 
at the fixed exterior boundary $\Sigma^f$
\begin{equation}\label{eq:overdetermined}
%=========================================
\begin{aligned}
\partial_t u &= \Delta u \ \ &&\text{in } Q_T, \\
u & = 0 \ \ &&\text{on } \Sigma_T, \\
u = f,& \ \ \frac{\partial u}{\partial \mathbf{n}} = g \ \ &&\text{on } \Sigma^f, \\
u(0, \cdot) & = 0 \ \ &&\text{in } \Omega_0.
\end{aligned}
\end{equation}
Here, $\nbfm$ denotes the normal pointing outward of the 
domain $\Omega_t$. 
In what follows, we assume that $f$ vanishes for $t=0$, which
implies the compatibility with the initial condition. 
We then seek the 
free boundary $\Sigma_T$, such that the overdetermined 
problem \eqref{eq:overdetermined} allows for a solution $u$.
In \cite[Theorem 1.1]{Chapko_et_al}, the uniqueness of such
a boundary $\Sigma_T$ is proven in the case of a time-independent 
boundary. In view of the bijective mapping $\boldsymbol\kappa$
\eqref{eq.tube_param}, this uniqueness result also holds in 
the time-dependent case. 

%===========================================================
\subsection{Reformulation as a shape optimization problem}\label{sec.shape_problem}
%===========================================================
The task of finding the unknown boundary $\Sigma_T$ is 
reformulated as a shape optimization problem by introducing the 
function $v$ as the solution of the initial boundary value 
problem with Dirichlet boundary conditions for the heat equation
\begin{equation}\label{eq.cat_diff_eq}
%=======================================================
\begin{aligned}
\partial_t v & = \Delta v   && \ \ \text{in } Q_T,  \\
v & = 0 && \ \ \text{on } \Sigma_T, \\
v & = f  &&\ \ \text{on } \Sigma^f, \\
v(0, \cdot) & = 0  &&\ \ \text{in } \Omega_0. 
\end{aligned}
\end{equation}

We set $Q_0 = (0,T)\times\Omega_0$, which has 
two time-independent boundaries denoted by $\Sigma_0 :=(0,T)
\times \partial \Omega_0$.
The appropriate function spaces for parabolic problems in time
invariat domains are the
anisotropic Sobolev spaces, defined by 
\[ H^{r,s} (Q_0) := L^2\big((0,T); H^r (\Omega_0)\big) \cap H^s\big((0,T); L^2(\Omega_0)\big), 
\]
see, e.g., \cite{Chapko_et_al, Costabel,lions_magenes_v2}. Likewise,
the corresponding boundary spaces are
%\[ H^{r,s} (\Sigma_T \cup \Sigma^f) := L^2((0,T); H^r (\Gamma_t \cup \Gamma^f)) \cap H^s((0,T); L^2(\Gamma_t \cup %\Gamma^f)). \]
\[ H^{r,s} (\Sigma_0) := L^2\big((0,T); H^r (\Sigma_0)\big) \cap H^s\big((0,T); L^2(\Sigma_0)\big) 
\]
which are defined for $C^2$-boundary when $r \leq 2$. 
With these definitions at hand, we can moreover define
\begin{equation*}
\begin{aligned}
 \hat{H}^{r,s}(Q_0) & :=\big\{u = U|_{Q_0} : U \in H^{r,s}\big((0,T) \times \Omega_0\big),\ U(t, \cdot) = 0,\ t < 0\big\},\\ 
 \tilde{H}^{r,s}(Q_0) & := \big\{u = U|_{Q_0} : U \in H^{r,s}\big((0,T)  \times \Omega_0\big),\ U(t, \cdot) = 0,\ T < t \big\},\\ 
  \hat{H}^{r,s}(\Sigma_0) & := \big\{ u = U|_{\Sigma_ 0} : 
 	U \in H^{r,s}\big((0,T) \times \Sigma_0\big),\ U(t, \cdot) = 0,\ t < 0\big\}. 
\end{aligned}
\end{equation*}
As in the elliptic case, we can include also (spatial) 
zero boundary conditions into the function spaces by setting
\begin{equation*}
\begin{aligned}
\hat{H}_0^{r,s} (Q_0) :=  \big\{u \in \hat{H}^{r,s}(Q_0) : u|_{ \Sigma_0} = 0 \big\},\\ 
\tilde{H}_0^{r,s} (Q_0) :=  \big\{u \in \tilde{H}^{r,s}(Q_0) : u|_{ \Sigma_0} = 0 \big\}.
\end{aligned}
\end{equation*}
%\begin{equation*}
%\begin{aligned}
% \hat{H}^{r,s}(Q_T) & := \lbrace u = U|_{Q_T} : U \in H^{r,s} (\cup_{t \in \mathbb{R}} \{ t \}  \times \Omega_t), U(t, \cdot) = 0, t < 0 \rbrace \\
% \tilde{H}^{r,s}(Q_T) & := \lbrace u = U|_{Q_T} : U \in H^{r,s} (\cup_{t \in \mathbb{R}} \{ t \}  \times \Omega_t, U(t, \cdot) = 0, T < t \rbrace  \\
% \hat{H}^{r,s}(\Gamma_t \cup \Gamma^f) & := \lbrace u = U|_{\Sigma_T \cup \Sigma^f} : U \in H^{r,s} (\cup_{t \in \mathbb{R}} \{ t \} \times (\Gamma_t \cup \Gamma^f)), U(t, \cdot) = 0, t < 0 \rbrace. 
%\end{aligned}
%\end{equation*}
%and 
%\begin{equation*}
%\tilde{H}_{0, \Gamma_t}^{r,s} (Q_T) := \lbrace u \in \tilde{H}^{r,s} (Q_T) : u|_{\cup_{t \in (0,T)} \{ t \}  \times (\Gamma_t)} = 0 \rbrace,
%\end{equation*}
%where we additionally indicate at which boundary the function is assumed to be zero. 
The dual spaces are denoted by $r,s \leq 0$ and we especially have
\[ 
  \hat{H} ^{-r, -s} (Q_0) = \big[\tilde{H}_0^{r,s} (Q_0)\big]' \quad \text{for } r - \frac{1}{2} \notin \mathbb{Z}. 
\]
%As in \cite{Costabel}, we also introduce the space
%\[ \mathcal{V}(Q_0) = \big\{u \in L^2\big((0,T); H^1(\Omega_0)\big) :
%	\partial_t u \in L^2\big((0,T); \tilde{H}^{-1} (\Omega_0)\big)\big\}, 
%\]
%which is a dense subspace of $H^{1, \frac{1}{2}} (Q_0)$. By $\mathcal{V}_0(Q_0)$
%we denote the space where we consider $H_0^1(\Omega_0)$ and its dual 
%$H^{-1}(\Omega_0)$ instead of $H^1(\Omega_0)$ and its dual 
%$\tilde{H}^{-1}(\Omega_0)$. Notice that these are the spaces 
%$W(0,T)$ from \cite[Definition 25.3]{wloka}.} 
Finally, we introduce the test space
\begin{equation}\label{eq.space_V}
%================================================
V(Q_0) := \big\{v = U|_{Q_0}: U \in C_0^2\big((- \infty ,T) \times \Omega_0\big)\big\}
%V(Q_T) := \lbrace v = U|_{Q_T}: U \in C_0^2(\cup_{- \infty < t < T}\lbrace t \rbrace \times \Omega_t)
\end{equation}
as in \cite{Chapko_et_al}, which is a dense subspace of $\tilde{H}_0^{1, \frac{1}{2}} (Q_0)$ \cite{Chapko_et_al} 
(for a $C^{\infty}$-boundary, see for example \cite[Remark 2.2 on pg.~8]{lions2}).

We are now in the position to introduce the 
non-cylindrical analogues of the above spaces by setting
\[H^{r,s}(Q_T) := \lbrace v \in L^2(Q_T): v \circ \kabfm \in H^{r,s} (Q_0) \rbrace \]
and likewise for all the other spaces, where the composition with $ \boldsymbol\kappa$ 
only acts on the spatial component. Due to the chain rule, $v\circ\boldsymbol\kappa$ 
and $v$ have the same Sobolev regularity, provided that the mapping $\boldsymbol\kappa$ 
is smooth enough, see for example \cite[Theorem 3.23]{McLean} for the elliptic case. 
We especially have the equivalence of norms for $|s| \leq 2$ 
\begin{equation}\label{eq.equivalence_norm}
%================================================
\| v \circ \kabfm \|_{H^s(\Omega_0)} \sim \| v \|_{H^s(\Omega_t)}.
\end{equation}

For the cylindrical case it is well known that the solution 
operator $f \mapsto S_0f := u$ of Dirichlet problem of the 
heat equation
\begin{alignat*}{3}
(\partial_t - \Delta ) u & = 0\quad && \text{in } Q_0, \\
 u & = f\quad  && \text{on } \Sigma_0.
\end{alignat*}
with homogeneous initial conditions is an isomorphism between the spaces
\[
S_0: \hat H^{\frac{1}{2}+s,\left(\frac{1}{2}+s\right)/2}(\Sigma_0)
\to \hat{H}^{1,\frac{1}{2}}(Q_0)
\]
for $s>-\frac{1}{2}$ when $\Omega_0$ is smooth and for $|s|<\frac{1}{2}$ when
$\Omega_0$ is Lipschitz, see \cite[Theorem 5.3]{lions_magenes_v2} and
\cite[Proposition 4.13]{Costabel}. 

For the existence, uniqueness and regularity of solutions to \eqref{eq.cat_diff_eq},
we have to make sure the analogous result also holds on a non-cylindrical 
domain. The main techique of the argument is to transport the heat
equation to a parabolic problem with variable coefficients in the
space-time cylinder $Q_0$ and apply the same functional analytic tools of
the above references there. 

\begin{theorem}
\label{thm.solution_operator_isomorphism}
There exists a unique solution $v \in \hat{H}^{1,\frac{1}{2}} (Q_T)$ 
satisfying the boundary condition in \eqref{eq.cat_diff_eq} and
\begin{equation}\label{eq.weak_form_pde}
%================================================
S(v, \varphi) := \int_0^T\int_{\Omega_t} \{\nabla v \cdot \nabla \varphi + \partial_t v \varphi\}\, \drm \xbfm \drm t
	= 0\ \text{for all}\ \varphi \in \tilde{H}^{1, \frac{1}{2}}_0(Q_T).
\end{equation}
\end{theorem}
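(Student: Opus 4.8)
The plan is to use the diffeomorphism $\boldsymbol\kappa$ from \eqref{eq.tube_param} to pull the problem back from the non-cylindrical tube $Q_T$ onto the fixed cylinder $Q_0$, where the isomorphism theory for the heat equation quoted above (from \cite{lions_magenes_v2, Costabel}) is available, and then to transfer the result back. Concretely, I would set $w := v\circ\boldsymbol\kappa$ and $\psi := \varphi\circ\boldsymbol\kappa$, the composition acting on the spatial variable only. By the very definition of the spaces $H^{r,s}(Q_T)$ and the equivalence of norms \eqref{eq.equivalence_norm}, the map $v\mapsto w$ is a linear isomorphism between $\hat{H}^{1,1/2}(Q_T)$ and $\hat{H}^{1,1/2}(Q_0)$ and between $\tilde{H}^{1,1/2}_0(Q_T)$ and $\tilde{H}^{1,1/2}_0(Q_0)$; since $\boldsymbol\kappa$ acts only in space it preserves the vanishing conditions at $t<0$ and $t>T$ as well as the Dirichlet traces on $\Sigma_0$ and $\Sigma^f$. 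Hence it suffices to prove existence and uniqueness for the transported problem on $Q_0$.

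The next step is the explicit transformation of the weak form \eqref{eq.weak_form_pde}. Writing $\on{D}\boldsymbol\kappa$ for the spatial Jacobian and $J:=\det\on{D}\boldsymbol\kappa>0$, the chain rule gives $(\nabla v)\circ\boldsymbol\kappa=(\on{D}\boldsymbol\kappa)^{-\top}\nabla w$ and $(\partial_t v)\circ\boldsymbol\kappa=\partial_t w-\big[(\on{D}\boldsymbol\kappa)^{-1}\partial_t\boldsymbol\kappa\big]\cdot\nabla w$. Substituting these into \eqref{eq.weak_form_pde} and changing variables with $\drm\ybfm=J\,\drm\xbfm$ turns $S(v,\varphi)=0$ into the equivalent identity
\[
\tilde{S}(w,\psi):=\int_0^T\!\!\int_{\Omega_0}\big\{A\nabla w\cdot\nabla\psi+J\,\partial_t w\,\psi+(\mathbf{b}\cdot\nabla w)\,\psi\big\}\,\drm\xbfm\,\drm t=0,
\]
where $A:=J\,(\on{D}\boldsymbol\kappa)^{-1}(\on{D}\boldsymbol\kappa)^{-\top}$ and $\mathbf{b}:=-J\,(\on{D}\boldsymbol\kappa)^{-1}\partial_t\boldsymbol\kappa$. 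By Remark~\ref{rem.singular_val} and the uniformity condition \eqref{eq.uniformity_cond}, the matrix $A$ is symmetric and uniformly elliptic, $J$ is bounded and bounded away from zero, $\mathbf{b}$ is uniformly bounded, and all coefficients are $C^1$ in $(t,\xbfm)$ because $\boldsymbol\kappa\in C^2$. Thus the transported problem is a genuinely parabolic equation with smooth, bounded coefficients and a positive weight on the time derivative, posed on the cylinder $Q_0$ with homogeneous initial data and Dirichlet data $f$ on $\Sigma^f$.

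It then remains to run the functional-analytic argument of \cite[Theorem 5.3]{lions_magenes_v2} and \cite[Proposition 4.13]{Costabel} for this variable-coefficient operator rather than for the bare heat operator. After lifting the boundary datum $f$ into $Q_0$ so as to reduce to homogeneous Dirichlet conditions and a bounded right-hand side functional on $\tilde{H}^{1,1/2}_0(Q_0)$, I would establish the mapping properties of the associated bilinear form: boundedness is immediate from the bounds on $A$, $J$, $\mathbf{b}$, while the decisive estimate is a G\aa rding/coercivity inequality on $\tilde{H}^{1,1/2}_0(Q_0)$. The uniform ellipticity of $A$ controls the principal part and the positivity of $J$ controls the weighted time derivative; the one term that does not carry a sign is the first-order convection term $(\mathbf{b}\cdot\nabla w)\psi$, and this is exactly where I expect the main difficulty. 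The standard remedy I would use is the exponential weighting $w=e^{\lambda t}\hat{w}$ with $\lambda$ large: this introduces an additional coercive zeroth-order term $\lambda J\hat{w}$, which together with the elliptic part absorbs the convection term via Young's inequality and restores coercivity, while $e^{\pm\lambda t}$ preserves all the anisotropic spaces over the finite interval $[0,T]$. With coercivity and boundedness in hand, the Lions projection lemma (resp.\ the inf-sup framework of the cited references) yields a unique $w\in\hat{H}^{1,1/2}(Q_0)$.

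Finally, transferring back via $v=w\circ\boldsymbol\kappa^{-1}$ and the norm equivalence \eqref{eq.equivalence_norm} produces the unique $v\in\hat{H}^{1,1/2}(Q_T)$ with the prescribed boundary and initial conditions solving \eqref{eq.weak_form_pde}; uniqueness is inherited because $v\mapsto w$ is a bijection of solution sets. Beyond the coercivity estimate for the non-symmetric lower-order term, the only point requiring care is checking that the mere $C^1$-regularity of the transported coefficients (coming from $\boldsymbol\kappa\in C^2$) is enough for the full $\hat{H}^{1,1/2}$ isomorphism claimed by the cited theorems, which is precisely what the smoothness hypotheses on $\Omega_0$ and $\boldsymbol\kappa$ are there to guarantee.
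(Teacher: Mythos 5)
Your proposal is correct and follows essentially the same route as the paper: transport the weak formulation to the cylinder $Q_0$ via $\boldsymbol\kappa$ (the paper's Lemma~\ref{lem.transport_eq_back_0}), establish boundedness and a G\aa rding-type coercivity estimate for the resulting variable-coefficient form using the uniformity condition \eqref{eq.uniformity_cond}, invoke the Lions--Magenes/Costabel machinery, and lift the Dirichlet datum by trace surjectivity. The only cosmetic differences are that you keep the Jacobian weight on the time-derivative term and restore strict coercivity by the exponential substitution $w=e^{\lambda t}\hat w$, whereas the paper divides through by $\det(\on{D}\boldsymbol\kappa)$ and works directly with the G\aa rding inequality $a(t;u^t,u^t)\geq \alpha\|u^t\|^2_{H^1(\Omega_0)}-\lambda\|u^t\|^2_{L^2(\Omega_0)}$ admitted by the cited theorem --- these devices are equivalent.
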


\begin{proof}
The assertion follows if we can show existence and uniqueness 
of the solution to the following generalization of problem 
\eqref{eq.cat_diff_eq}
\begin{equation}
\label{eq.generalized_diff_eq}
%=======================================================
\begin{aligned}
(\partial_t - \Delta) v & = h   && \ \ \text{in } Q_T,  \\
 v & = f && \ \ \text{on } \Sigma_T \cup \Sigma^f, \\
v(0, \cdot) & = 0   &&\ \ \text{in } \Omega_0.
\end{aligned}
\end{equation}
Its weak formulation reads
\begin{equation}\label{eq.weak_wo_reynold}
%=======================================================
  S(v,u) = \int_0^T \int_{\Omega_t} hu\,\d{\bf x}\d t,
\end{equation}
where $S$ is given by \eqref{eq.weak_form_pde}. We set 
$u^t = u \circ \kabfm$ and similarly for $v^t$ and $h^t$.

Transforming \eqref{eq.weak_wo_reynold} back to $Q_0$ 
by using Lemma \ref{lem.transport_eq_back_0} with $\xibfm = \kabfm$, 
$Q_{\varsigma} = Q_T$ and $Q_{\tau} = Q_0$ gives
\[ \int_0 ^T \langle \partial_t v^t(t), u^t(t) \rangle_{L^2(\Omega_0)} 
+ a(t; v^t(t), u^t(t)) \, \drm t = \int_0 ^T \langle h^t(t), u^t(t)\rangle_{L^2 (\Omega_0)} \, \drm t, 
\]
where $a$ is defined in Lemma \ref{lem.transport_eq_back_0}.

To show solvabilty of \eqref{eq.weak_wo_reynold} we apply
\cite[Chapter 3, Theorem 4.1.]{lions_magenes_v1}, which requires
boundedness and  coercivity of $a$.
The boundedness follows easily from Remark \ref{rem.singular_val}. 
It remains to show coercivity,  that is, there exist 
some constants $\alpha > 0$, $\lambda \in \mathbb{R}$, such 
that for almost all $t \in (0,T)$
\begin{equation}\label{eq:coercivity}
%======================================
  a(t; u^t, u^t) \geq \alpha \|u^t\|^2_{H^1(\Omega_0)} 
	- \lambda \|u^t \|^2_{L^2(\Omega_0)}
\end{equation}
holds for all $u^t \in H^1_0(\Omega_0)$. With the help 
of the Cauchy-Schwarz inequality, we have
\begin{align*}
a(t; u^t, u^t) &\geq \int_{\Omega_0} \|(\on{D} \kabfm)^{-\intercal} \nabla u^t \|^2 \, \drm \xbfm \\
& \quad - \int_{\Omega_0} \bigg\| \Big(\underbrace{(\on{D} \kabfm)^{-\intercal}  
	\frac{1}{\det (\on{D} \kabfm)} \nabla \big( \det (\on{D} \kabfm)\big)}_{:=a_1} 
	+ \underbrace{\partial_t \kabfm}_{:=a_2} \Big) u^t \bigg\| \| (\on{D} \kabfm)^{-\intercal} \nabla u^t \| \, \drm \xbfm.
\end{align*}
Completing the square gives
\begin{align*}
a(t; u^t, u^t) & \geq \underbrace{\int_{\Omega_0} \frac{1}{2} \Big( \|(\on{D} \kabfm)^{-\intercal} \nabla u^t \| - \|(a_1+a_2)u^t\| \Big)^2}_{\geq 0} \, \drm \xbfm \\
& + \int_{\Omega_0} \frac{1}{2} \|(\on{D} \kabfm)^{-\intercal} \nabla u^t \|^2 \, \drm \xbfm - \int_{\Omega_0} \frac{1}{2} \|(a_1+a_2)u^t\|^2 \, \drm \xbfm.
\end{align*}
Discarding the positive term and due to 
Remark \ref{rem.singular_val}, we have
\begin{align*}
a(t; u^t, u^t) & \geq C |u^t|^2_{H^1(\Omega_0)} - \frac{1}{2} \int_{\Omega_0} |u^t|^2 \| a_1 + a_2 \|^2 \, \drm \xbfm.
\end{align*}
and, therefore, by using the parallelogram law
\begin{align*}
a(t; u^t, u^t) & \geq C |u^t|^2_{H^1(\Omega_0)} -  \int_{\Omega_0} |u^t|^2 ( \| a_1 \|^2 + \| a_2 \|^2) \, \drm \xbfm.
\end{align*}
Now we can apply again Remark \ref{rem.singular_val} to $a_1$ and $a_2$ and 
the Poincar{\'e}-Friedrichs inequality to the first term to arrive at the desired 
estimate \eqref{eq:coercivity}.

Secondly, following the lines of \cite{Costabel}, the analogue of 
\cite[Lemma 2.8]{Costabel} reads: For every $h \in \hat{H}^{-1, - \frac{1}{2}} (Q_T)$, 
there exists a unique $v \in \hat{H}^{1, \frac{1}{2}}_0(Q_T)$ satisfying 
$(\partial_t - \Delta) v = h$ in $Q_T$. For the proof, we can straightforwardly 
modify the proof \cite[Lemma 2.8]{Costabel}, which uses the adjoint operator 
and interpolation results. 

Thirdly, due to the surjectivity of the trace operator, we can then follow 
the proof of \cite[Theorem 2.9]{Costabel} to obtain the statement in the 
theorem.
\end{proof}

For the given state equation \eqref{eq.cat_diff_eq},
we introduce the tracking-type functional for the 
Neumann data at the fixed boundary $\Sigma^f$
\begin{equation}\label{eq.cat_functional_neumann_tracking}
%=======================================================
J (\Sigma_T) = \frac{1}{2} \int_0^T \int_{\Gamma^f} 
	\left( \frac{\partial v}{\partial \nbfm} - g \right)^2 \, \drm \sigma \drm t.
\end{equation}
This objective functional should be minimized in the space of admissible 
boundaries $\Sigma_T$. It is nonnegative, and it is zero and hence 
minimal if and only if $v=u$. The objective functional measures the 
$L^2$-error of the data mismatch and thus corresponds to the 
minimization in the least-squares sense.

%====================================================================
\section{Computation of the shape derivative} \label{sec.shape_calc_neumann}
%====================================================================
\subsection{Shape calculus}
%====================================================================
In order to minimize the objective functional \eqref{eq.cat_functional_neumann_tracking}, 
we apply a gradient-based optimization method. To this end, 
we shall compute the shape derivative of the functional. 

% The time-independent 
% shape calculus has been established for the perturbation of identity (Murat 
% and Simon) and the speed method (Sokolowski and Zol\'esio), see 
% \cite{Delfour_Zolesio,murat_simon, simon,Soko_Zolesio,zolesio_identification} 
% and the references therein. 
The shape calculus for time-dependent problems has been 
formulated by means of the speed method in \cite{Dziri} and 
\cite{Moubachir}. The speed method allows for deformations 
which are not only small perturbations of the domain. One 
intends to find a velocity field $\Vbfm$, which generates the 
optimal tube. The solution $\ALEoV{t}{\cdot} : \xbfm \mapsto 
\xbfm_t = \ALEoV{t}{x}$ of the differential equation 
\cite[pg.~6]{zolesio_identification}
\[
\begin{aligned}
\frac{\partial }{\partial t} \ALEoV{t}{x}  & = \Vbfm\big(t, \ALEoV{t}{x}\big) && \ \ \text{in }  (0,T) \times \Omega_0,  \\
\ALEoV{0}{x} & = \xbfm && \ \ \text{in } \Omega_0
\end{aligned}
\]
describes the pathline of an individual particle being exposed to 
the velocity field $\Vbfm$. Hence, if we would inject a drop of dye 
at a certain point and time, and we do a time-lapse photography, 
we would see the pathline \cite{spurk}. In other words, when 
considering $t$ as the trajectory parameter, a fixed point $\xbfm$ 
gets moved along the trajectory $\xbfm_t = \ALEoV{t}{x}$. The 
point $\xbfm$ can be thought of as the Lagrangian (or material) 
coordinate, while $\xbfm_t$ is the Eulerian (field) coordinate 
\cite[pg.~49]{Soko_Zolesio}. The speed method is favorable 
when considering the Eulerian setting \cite{Moubachir}.

For the Lagrangian setting, which we consider here, the perturbation 
of identity is preferable. The shape calculus for the perturbation 
of identity is briefly stated in \cite{Moubachir} as well. For our 
computations, we shall exploit the bijective mapping $\boldsymbol
\kappa$ from \eqref{eq.tube_param}, which implies the mapping 
scheme displayed in Figure~\ref{fig.mapping_scheme_perturb_id}.
With the mapping $\boldsymbol\kappa$ we can associate 
the velocity field 
\begin{equation}\label{eq.velocity_field}
%============================================
\Vbfm = \partial_t \boldsymbol\kappa \circ \boldsymbol\kappa^{-1},
\end{equation}
which could be used for the speed method. Since the outer 
boundary $\Sigma^f$ of the tube is fixed, this vector field is 
zero in normal direction there. 

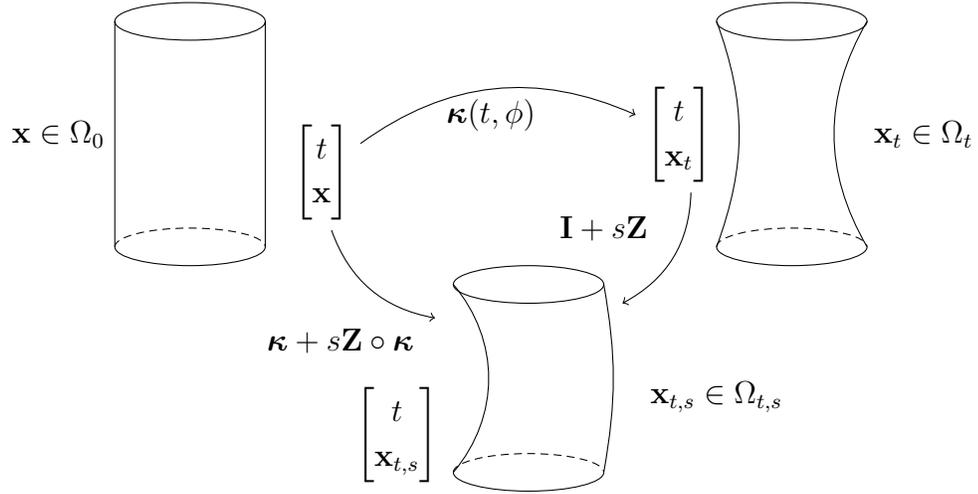
\begin{figure}[htb]
\centering
\begin{tikzpicture}[scale=0.5]
\draw (-2,6) -- (-2,0) arc (180:360:2cm and 0.5cm) -- (2,6) ++ (-2,0) circle (2cm and 0.5cm);
\draw[densely dashed] (-2,0) arc (180:0:2cm and 0.5cm);
%\draw (-2,0) rectangle ++(4,3);
%\node at (-2.5,3.5) {$0$};
%\node at (2,3.5) {$T$};
%\node (C) at (-2.5,0) {$2 \pi$};
%\node at (-2.5,1.5) {$\phi$};
%\node at (0,3.5) {$t$};

\draw[bend left=20] (14,6) to node [auto] {} (14,0);
\draw (14,0) arc (180:360:2cm and 0.5cm);
\draw[bend left=30] (18,0) to node [auto] {} (18,6);
\draw (18,6) ++ (-2,0) circle (2cm and 0.5cm);
\draw[densely dashed] (14,0) arc (180:0:2cm and 0.5cm);

\node (C) at (6.8,-2) {};

\draw[bend left=40] (7,-1) to node [auto] {} (7,-6);
\draw (7,-6) arc (180:360:2cm and 0.5cm);
\draw[bend right=10] (11,-6) to node [auto] {} (11,-1);
\draw (11,-1) ++ (-2,0) circle (2cm and 0.5cm);
\draw[densely dashed] (7,-6) arc (180:0:2cm and 0.5cm);

\node (A) at (3.5,2) {$\begin{bmatrix} t \\ \xbfm \end{bmatrix}$};
\node (B) at (13,3) {$\begin{bmatrix} t \\ \xbfm_t \end{bmatrix}$};
\node (CC) at (5.5,-5) {$\begin{bmatrix} t \\ \xbfm_{t,s} \end{bmatrix}$};
\draw[->,  bend left=30] (A) to (B);
\node (E) at (8,3.5) {$\boldsymbol\kappa (t, \phi) $};

\draw[->,  bend right=30] (A) to (C);
\node at (4,-2.5) {$\boldsymbol\kappa + s \Zbfm \circ \boldsymbol\kappa $};
\draw[->,  bend left=30] (B) to (11.5,-1.5);
\node at (11,0.5) {$\mathbf{I} + s \Zbfm $};

\node (G) at (19.5,3) {$\xbfm_t \in \Omega_t$};
\node (H) at (14,-4) {$\xbfm_{t,s} \in \Omega_{t,s}$};
\node (HH) at (-3.5,3) {$\xbfm \in \Omega_0$};
\end{tikzpicture}
\caption{Perturbation of identity in the Lagrangian setting.}
\label{fig.mapping_scheme_perturb_id}
\end{figure}

In order to apply the traditional shape calculus, 
we would like 
to perturb the tube. To this end, we consider a vector field 
$\Zbfm (t, \xbfm)$, which generates the perturbation of identity 
$\mathbf{I} + s \Zbfm$. It yields a new tube 
\[
  Q_T^s = \bigcup_{0<t<T} \big(\{t\}\times (\mathbf{I}+s\Zbfm)(\Omega_t)\big).
\]
Notice that the perturbations under consideration are 
horizontal, meaning that we consider perturbations of 
$(t,\boldsymbol\kappa)$ of the type $(0,\Zbfm)$, 
compare \cite{Moubachir}. Moreover, $\mathbf{I} + s\Zbfm$ should satisfy a uniformity condition as in \eqref{eq.uniformity_cond}. 

%====================================================================
\subsection{Local shape derivative}
%====================================================================
As in the time-independent case, we can define non-cylindrical 
material and local shape derivatives. The material derivative 
$\dot{v} [\Zbfm]$ is defined as 
\[ 
  \dot{v} [\Zbfm] = \lim_{s \to 0} \frac{v_{t,s} (t, \cdot) \circ (\mathbf{I} + s \Zbfm) - v_t}{s}, 
\]
while the local shape derivative $\delta v = \delta v [\Zbfm]$ in the 
direction $\Zbfm$ is given by
\[ 
\delta v [\Zbfm] = \lim_{s \to 0} \frac{v_{t,s} (t, \cdot) - v_t}{s}. 
\]
Here, $v_{t,s}$ denotes the state computed on the perturbed 
domain $Q_T^s$ and $v_t$ the state computed on $Q_T$,
see \cite[pg.~166]{Moubachir}. These two non-cylindrical 
derivatives are connected by the relation
\[ 
  \delta v [\Zbfm] = \dot{v} [\Zbfm] - \nabla v \cdot \Zbfm. 
\]

\begin{theorem} \label{thm.local_shape_der}
The local shape derivative  of the state $v$ from 
\eqref{eq.cat_diff_eq} can be computed as the solution 
of the partial differential equation
\begin{equation} \label{eq.local_shape_der}
\begin{aligned}
\partial_t \delta v & = \Delta \delta v  && \ \ \text{in } Q_T,\\
\delta v & = - \langle \mathbf{Z}, \mathbf{n} \rangle \frac{\partial v}{\partial \mathbf{n}}  && \ \ \text{on } \Sigma_T, \\
\delta v & = 0 &&\ \ \text{on } \Sigma^f, \\
\delta v(0, \cdot) & = 0 &&\ \ \text{in } \Omega_0.
\end{aligned}
\end{equation}
\end{theorem}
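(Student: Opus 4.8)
The plan is to differentiate the state system \eqref{eq.cat_diff_eq} with respect to the perturbation parameter $s$ at $s=0$, treating the interior equation, the two boundary conditions, and the initial condition separately. Before any differentiation can be performed, one must know that the material derivative $\dot v[\mathbf{Z}]$, and hence the local derivative $\delta v = \dot v - \nabla v\cdot\mathbf{Z}$, actually exists in a suitable space. I would establish this by transporting the perturbed state $v_{t,s}$ back to the fixed reference cylinder $Q_0$ (as in the proof of Theorem~\ref{thm.solution_operator_isomorphism}), where the pullback solves a variable-coefficient parabolic problem whose coefficients depend smoothly on $s$ through the transport factors of Remark~\ref{rem.singular_val}. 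Since Theorem~\ref{thm.solution_operator_isomorphism} provides a uniform isomorphism for such transported problems, the difference quotient in $s$ can be shown to converge, and the limit inherits the regularity $\hat{H}^{1,\frac{1}{2}}$. This existence-and-regularity step is what I expect to be the main difficulty, as it requires quantitative control of the $s$-dependence of the bilinear form $a$ and of the Jacobian factors, rather than a merely formal differentiation.

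For the interior equation I would exploit that the admissible perturbations are horizontal, i.e. of type $(0,\mathbf{Z})$, so that the local shape derivative $\delta$ commutes with both the time derivative $\partial_t$ and the spatial differential operators. Concretely, for any fixed interior point of $Q_T$ the identity $\partial_t v_{t,s} - \Delta v_{t,s} = 0$ holds for all sufficiently small $s$; subtracting the unperturbed equation, dividing by $s$, and letting $s\to 0$ yields $\partial_t\delta v - \Delta\delta v = 0$ in $Q_T$, which is the first line of \eqref{eq.local_shape_der}.

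The characteristic computation is the boundary condition on $\Sigma_T$, and here I would argue through the material derivative. Since $v_{t,s}$ vanishes on the perturbed interior boundary $(\mathbf{I}+s\mathbf{Z})(\Gamma_t)$, its pullback $v_{t,s}(t,\cdot)\circ(\mathbf{I}+s\mathbf{Z})$ vanishes on $\Gamma_t$ for every $s$, and so does $v_t$; hence $\dot v[\mathbf{Z}] = 0$ on $\Sigma_T$. Inserting this into the relation $\delta v = \dot v - \nabla v\cdot\mathbf{Z}$ gives $\delta v = -\nabla v\cdot\mathbf{Z}$ on $\Sigma_T$. Because $v(t,\cdot)$ is constant (equal to zero) along $\Gamma_t$ for each fixed $t$, its spatial tangential gradient vanishes and the spatial gradient reduces to its normal part, $\nabla v = \frac{\partial v}{\partial\mathbf{n}}\,\mathbf{n}$ on $\Gamma_t$. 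Substituting then yields $\delta v = -\langle\mathbf{Z},\mathbf{n}\rangle\frac{\partial v}{\partial\mathbf{n}}$, the second line of \eqref{eq.local_shape_der}.

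Finally, the remaining two conditions follow directly from the definition of the local derivative. On the fixed outer boundary $\Sigma^f$ the datum $f$ is independent of $s$, so that $v_{t,s} = v_t = f$ there and therefore $\delta v = 0$ on $\Sigma^f$; likewise the initial condition $v_{t,s}(0,\cdot)=0$ holds for every $s$, whence $\delta v(0,\cdot)=0$ in $\Omega_0$. Collecting the four contributions gives precisely the system \eqref{eq.local_shape_der}.
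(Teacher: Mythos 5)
Your proposal is correct in substance and arrives at the right system, but it reaches the interior equation by a genuinely different route than the paper. The paper works entirely through the weak formulation: it first proves the material-derivative lemma (Lemma~\ref{lem.material_derivative}), i.e.\ $S(\dot v,\varphi)=G(\varphi)$ with $G$ as in \eqref{eq.rhs_material_der}, and then, in the proof of Theorem~\ref{thm.local_shape_der}, rewrites the integrand of $G$ via the pointwise identity from \cite{Chapko_et_al} and two applications of the divergence theorem until $G(\varphi)=\int_0^T\int_{\Omega_t}\{\partial_t(\nabla v\cdot\mathbf{Z})\varphi+\nabla(\mathbf{Z}\cdot\nabla v)\cdot\nabla\varphi\}$, i.e.\ $G(\varphi)=S(\mathbf{Z}\cdot\nabla v,\varphi)$; subtracting gives $S(\delta v,\varphi)=0$, which is the interior heat equation in weak form. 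You instead obtain the interior equation by fixing an interior point, noting that it lies in $Q_T^s$ for small $s$, and passing $\partial_t-\Delta$ through the limit of the difference quotient; this is more elementary and avoids the integration-by-parts gymnastics, and it is legitimate once the convergence of the difference quotient is known at least in the sense of distributions on compact interior sets (hypoellipticity of the heat operator then upgrades the limit). The boundary condition on $\Sigma_T$ you derive exactly as the paper does implicitly: $\dot v=0$ there, and the tangential gradient of $v$ vanishes on the level set $\Gamma_t$, so $\delta v=-\langle\mathbf{Z},\mathbf{n}\rangle\,\partial v/\partial\mathbf{n}$. What your sketch underestimates is the existence step: the paper's Lemma~\ref{lem.material_derivative} does not follow merely from a ``uniform isomorphism'' for the transported problems; one must actually transport the perturbed bilinear form with $\boldsymbol\xi=\mathbf{I}+s\mathbf{Z}$ (which, unlike the elliptic case, produces the extra convective term involving $\partial_t\mathbf{Z}$), form the functionals $G_s$, prove $G_s\to G$ in the dual of $\tilde H_0^{1,\frac12}(Q_T)$ using the convergence $v^{t,s}\to v$ (obtained via \cite{Lady} after a further transport to $Q_0$), and only then invoke the isomorphism of Theorem~\ref{thm.solution_operator_isomorphism}. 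You should also state explicitly that the $\hat H^{2,1}$-regularity of $v$ is needed so that $\nabla v$ has a trace on $\Sigma_T$ and the boundary condition is meaningful; the paper flags this point. With those two items filled in, your argument is a valid, somewhat more direct alternative to the paper's computation.
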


The proof of the local shape derivative is presented in 
Appendix \ref{sec.proof_local_shape}, where we reformulate 
the time-indepen\-dent proof found in \cite{Chapko_et_al} for 
the time-dependent setting.

%====================================================================
\subsection{Shape derivative of the objective functional}
%====================================================================
With the local shape derivative at hand, we are in the 
position to compute the shape derivative of the objective 
functional \eqref{eq.cat_functional_neumann_tracking}, 
which is defined by
\[ 
 \nabla J (Q_T)[\Zbfm] = \lim_{s \to 0} \frac{ J\left(Q_T^s \right) - J\left(Q_T\right)}{s}. 
\]

\begin{theorem}
The shape derivative of the objective functional 
\eqref{eq.cat_functional_neumann_tracking} in the 
direction $\Zbfm (t, \xbfm) \in C^2(\Sigma_T)$ reads
\begin{equation}\label{eq.shape_grad_neumann_tracking}
%===================================================
  \nabla J(Q_T) [\Zbfm] = - \int_0^T \int_{\Gamma_t} \frac{\partial p}{\partial \nbfm} 
  	\frac{\partial v}{\partial \nbfm} \langle \Zbfm, \nbfm \rangle \, \drm \sigma \drm t,
\end{equation}
where the adjoint state $p$ satisfies also the 
heat equation, but reversal in time:
\begin{equation}\label{eq.adjoint_problem}
%===================================================
\begin{aligned}
-\partial_t p & =  \Delta p \ \ && \text{in } Q_T, \\
p & =0 \ \ && \text{on } \Sigma_T, \\
p & = \frac{\partial v}{\partial \nbfm} - g \ \ && \text{on } \Sigma^f, \\
p(T, \cdot) & = 0 \ \ && \text{in } \Omega_T.
\end{aligned}
\end{equation}
\end{theorem}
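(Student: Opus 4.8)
The plan is to use the adjoint method: first express $\nabla J(Q_T)[\Zbfm]$ through the local shape derivative $\delta v$ from Theorem~\ref{thm.local_shape_der}, then eliminate $\delta v$ in favour of the adjoint state $p$ by a space-time Green's identity. Since the functional \eqref{eq.cat_functional_neumann_tracking} is an integral over the \emph{fixed} exterior boundary $\Gamma^f$ and the data $g$ is independent of the shape, I would differentiate under the integral sign. Because both $\Gamma^f$ and its outward normal $\nbfm$ do not depend on the perturbation parameter $s$, the variation of the integrand is governed by the local shape derivative, and the spatial normal derivative commutes with the operation $\delta[\cdot]$. This gives
\[
\nabla J(Q_T)[\Zbfm] = \int_0^T \int_{\Gamma^f} \Big(\frac{\partial v}{\partial \nbfm} - g\Big) \frac{\partial (\delta v)}{\partial \nbfm}\, \drm \sigma \drm t,
\]
which still contains the (expensive) shape derivative $\delta v$ and is the quantity to be rewritten.

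Next I would introduce the adjoint state $p$ defined by \eqref{eq.adjoint_problem} and evaluate
\[
I := \int_0^T \int_{\Omega_t} \big(p\,\Delta \delta v - \delta v\,\Delta p\big)\, \drm \xbfm \drm t
\]
in two ways. Green's second identity in space represents $I$ as the boundary integral $\int_0^T \int_{\partial\Omega_t}(p\,\partial_{\nbfm}\delta v - \delta v\,\partial_{\nbfm}p)\,\drm\sigma\drm t$. On the other hand, inserting the two heat equations $\Delta\delta v = \partial_t\delta v$ and $\Delta p = -\partial_t p$ turns the integrand of $I$ into the total time derivative $\partial_t(p\,\delta v)$. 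The delicate point is then the time integration of $\partial_t(p\,\delta v)$ over the \emph{moving} domain, for which I would invoke the Reynolds transport theorem. It produces the two time-cap contributions $\int_{\Omega_T} (p\,\delta v)|_{t=T} - \int_{\Omega_0} (p\,\delta v)|_{t=0}$ together with a surface term weighted by the normal velocity $\langle\Vbfm,\nbfm\rangle$ of $\partial\Omega_t$. Both caps vanish by the terminal condition $p(T,\cdot)=0$ and the initial condition $\delta v(0,\cdot)=0$, while the transport surface term vanishes because $\langle\Vbfm,\nbfm\rangle = 0$ on the fixed exterior part $\Sigma^f$ and $p=0$ on the interior part $\Sigma_T$. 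Hence $I=0$, so the spatial boundary integral vanishes identically.

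Splitting $\partial\Omega_t = \Gamma^f \cup \Gamma_t$ and inserting the boundary data — namely $\delta v = 0$ together with $p = \partial_{\nbfm}v - g$ on $\Gamma^f$, and $p = 0$ together with $\delta v = -\langle\Zbfm,\nbfm\rangle\,\partial_{\nbfm}v$ on $\Gamma_t$ — collapses the identity $I=0$ to
\[
\int_0^T\int_{\Gamma^f}\Big(\frac{\partial v}{\partial\nbfm}-g\Big)\frac{\partial(\delta v)}{\partial\nbfm}\,\drm\sigma\drm t = -\int_0^T\int_{\Gamma_t}\frac{\partial p}{\partial\nbfm}\frac{\partial v}{\partial\nbfm}\langle\Zbfm,\nbfm\rangle\,\drm\sigma\drm t .
\]
Since the left-hand side is exactly the expression for $\nabla J(Q_T)[\Zbfm]$ obtained in the first step, this yields the claimed formula \eqref{eq.shape_grad_neumann_tracking}.

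The main obstacle I expect is the rigorous justification of the moving-domain integration by parts: controlling the Reynolds transport term and confirming that the time-cap and normal-velocity contributions genuinely cancel within the low-regularity, anisotropic spaces $\hat H^{1,1/2}$ supplied by Theorem~\ref{thm.solution_operator_isomorphism}. A secondary technical point is legitimising the differentiation under the integral in the first step, that is, the interchange $\delta[\partial_{\nbfm}v] = \partial_{\nbfm}(\delta v)$ on the fixed boundary $\Gamma^f$, which hinges on the regularity of $v$ and of its local shape derivative established in Theorem~\ref{thm.local_shape_der}.
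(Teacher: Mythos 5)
Your proposal is correct and follows essentially the same route as the paper's proof: expressing $\nabla J(Q_T)[\Zbfm]$ through $\partial_\nbfm(\delta v)$ on the fixed boundary, combining the two heat equations into the total derivative $\partial_t(p\,\delta v)$, handling the moving domain with the Reynolds transport theorem, and inserting the boundary conditions of $p$ and $\delta v$. The only cosmetic difference is that you kill the transport surface term on $\Sigma^f$ via $\langle\Vbfm,\nbfm\rangle=0$ while the paper uses $\delta v=0$ there; both are valid.
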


\begin{proof}
Since we are not perturbing the exterior boundary, we have
$\Zbfm = {\bf 0}$ in a neighborhood of $\Sigma^f$. Therefore, 
we conclude
\[ 
  \nabla J (Q_T) [\Zbfm] = \int_0^T \int_{\Gamma^f} \frac{\partial \delta v}{\partial \nbfm} 
  	\left(\frac{\partial v}{\partial \nbfm} - g \right) \, \drm \sigma \drm t. 
\]
In view of the adjoint state equation \eqref{eq.adjoint_problem}, 
we can reformulate the derivative of $J$ by
\[
  \nabla J (Q_T)[\Zbfm]  =  \int_0^T \int_{\Gamma^f} 
  	p \frac{\partial \delta v }{\partial \nbfm} \, \drm \sigma \drm t.
\]
To derive \eqref{eq.shape_grad_neumann_tracking}, we 
apply Green's theorem and obtain
\begin{align*}
0 & = \int_0 ^T \int_{\Omega_t} \big\{(\partial_t \delta v- \Delta \delta v) p 
	+ \delta v (\partial_t p + \Delta p)\big\} \, \drm \xbfm \drm t \\
&= \int_0 ^T \int_{\Omega_t} \partial_t (\delta vp) \, \drm \xbfm \drm t 
	+ \int_0 ^T \int_{\Gamma_t \cup\Gamma^f} \bigg\{\frac{\partial p}{\partial \nbfm} \delta v
	- \frac{\partial \delta v}{\partial \nbfm} p\bigg\} \, \drm \sigma \drm t.
\end{align*}
Since the integrands are smooth enough, we can apply the
Reynolds transport theorem (see \cite[pg.~78]{Gurtin} for example) 
to treat the domain integral. Recall that the velocity $\Vbfm$, which
transports the initial domain through the space-time tube, is given 
by \eqref{eq.velocity_field}. In combination with the end and 
initial conditions of $p$ and $\delta v$, respectively, we thus 
obtain
\begin{align*}
0 & = \underbrace{\int_0 ^T \frac{\drm}{\drm t} \int_{\Omega_t} 
	\delta vp\,\drm \xbfm \drm t}_{=\,0} - \int_0 ^T \int_{\Gamma^f \cup\Gamma_t} 
	\underbrace{\delta v}_{=\,0\,\text{on}\,\Gamma^f}\underbrace{p}_{=\,0\,\text{on}\,\Gamma_t}
	\langle\Vbfm, \nbfm \rangle\, \drm \sigma \drm t \\
  & \hspace*{5cm} + \int_0 ^T \int_{\Gamma_t} \frac{\partial p }{\partial \nbfm} 
	\delta v\,\drm \sigma \drm t - \int_0 ^T \int_{\Gamma^f}
	\frac{\partial \delta v}{\partial \nbfm} p \, \drm \sigma \drm t,
\end{align*}
that is
\[
  \int_0 ^T \int_{\Gamma^f} \frac{\partial \delta v}{\partial \nbfm} p \, \drm \sigma \drm t
	= \int_0 ^T \int_{\Gamma_t} \frac{\partial p }{\partial \nbfm} \delta v\,\drm \sigma \drm t.
\]
Hence, by inserting the boundary condition for 
$\delta v$, we finally arrive at the desired result 
\eqref{eq.shape_grad_neumann_tracking}.
\end{proof}

Note that the tracking-type functional for the Dirichlet data 
has been considered in the setting of the speed method in 
\cite[pg.~36--46]{Moubachir}. It leads also to the same 
local shape derivative and shape gradient as in the 
time-independent case derived in \cite{HHTauschCat}. 
This is thus consistent with the formulae stated here in 
case of the tracking-type functional for the Neumann data.

%====================================================================
\section{Discretization of the shape optimization problem}\label{sec.discretization_shape_opt}
%====================================================================
For our numerical experiments, we consider a two-dimensional spatial 
domain with a star-shaped void. As only its boundary is of interest and 
the shape gradient is also defined as a boundary integral, it suffices to 
parametrize just the interior boundary. Moreover, we consider only boundary 
perturbation fields $\Zbfm$, because these are the only relevant 
perturbation fields as \eqref{eq.shape_grad_neumann_tracking} 
shows. 

Our choice of parametrization of the interior moving boundary 
$\Sigma_T$ of $Q_T$ is 
\[ 
\Sigma_T = \left\{
\begin{bmatrix}
t \\ \boldsymbol\gamma(t,\phi)
\end{bmatrix}\in\mathbb{R}^3: t\in [0,T],\ \phi\in [0,2\pi)\right\},
\]
where the time-dependent parametrization 
$\boldsymbol\gamma(t,\cdot):[0,2\pi)\to\Gamma_t$ employs
polar coordinates
\[ \boldsymbol\gamma(t,\phi) =  w(t, \phi) \begin{bmatrix}
\cos( \phi) \\
\sin(\phi)
\end{bmatrix}.\]
Here, $w(t, \phi)$ denotes the time- and angle-dependent radius, 
given by
\[
w(t,\phi) := \sum_{\ell=0}^{N_L} L_{\ell} (t) \Bigg(\underbrace{
\alpha_{0,\ell} +  \sum_{k=1}^{N_K-1}\big\{\alpha_{k,\ell}  \cos (k \phi) 
+ \beta_{k,\ell}\sin(k \phi)\big\}+\alpha_{N_K,\ell}\cos(N_K\phi)}_{=: \omega (\phi)}\Bigg), 
\]
with $L_\ell(t)$ being appropriate dilations and translations 
of the Legendre polynomials of degree $\ell$.

Finding the optimal tube now corresponds to determining the 
unknown coefficients $\alpha_{k,\ell}$ and $\beta_{k,\ell}$ of the 
parametrization. Hence, we have the following finite dimensional 
problem:
\[
\text{Seek } \boldsymbol\gamma^\star \in Z_N 
	\text{ such that } \nabla J (\boldsymbol\gamma^\star)[\Zbfm] = 0 
		\ \text{for all } \Zbfm \in Z_N.
\]
Here, $Z_N$ is the finite dimensional ansatz space of 
parametrizations. To compute the discrete shape gradient, 
we hence have to consider the directions
\begin{equation}\label{eq.choice_of_Z1}
%==========================================
(\Zbfm \circ\boldsymbol\gamma)(t,\phi) = L_\ell(t) \cos (k\phi) \begin{bmatrix}
\cos(\phi) \\ \sin(\phi)
\end{bmatrix}
\end{equation}
for all $\ell=0, \dots, N_L$ and $k = 0, \dots,N_K$, and
\begin{equation}\label{eq.choice_of_Z2}
%==========================================
(\Zbfm \circ\boldsymbol\gamma)(t,\phi) = L_\ell(t) \sin (k\phi) \begin{bmatrix}
\cos(\phi) \\ \sin(\phi)\end{bmatrix}
\end{equation}
for all $\ell=0, \dots, N_L$ and $k = 1, \dots, N_K-1$.

With the specific parametrization at hand, the discrete shape 
gradient with respect to the parameters $t$ and $\phi$ reads
\begin{equation}\label{eq.shape_grad_param}
%==========================================
 \nabla J (Q_T) = \int_0^T \int_0^{2 \pi}\bigg(\frac{\partial p}{\partial \nbfm}\circ\boldsymbol\gamma\bigg) 
 \bigg(\frac{\partial v}{\partial \nbfm}\circ\boldsymbol\gamma\bigg)
 \left[\begin{smallmatrix} L_1(t)\\\vdots\\ L_{N_L}(t)\end{smallmatrix}\right]\otimes
 \left[\begin{smallmatrix}
 \sin((N_K-1)\phi)\\ \vdots\\ \sin(\phi)\\ 1 \\ \cos(\phi) \\ \vdots \\ \cos(N_K\phi)\end{smallmatrix}\right]
  w(t, \phi) \, \drm \phi \drm t, 
\end{equation}
compare \eqref{eq.shape_grad_neumann_tracking}, where 
we plugged in the choices for the perturbation fields \eqref{eq.choice_of_Z1} 
and \eqref{eq.choice_of_Z2}, respectively, and used the parametrization 
$\boldsymbol\gamma$ to compute the normal $\nbfm$.

The integral in the shape gradient \eqref{eq.shape_grad_neumann_tracking} 
is computed by using a trapezoidal rule in space and a trapezoidal rule
with a singularity correction at the endpoint $t = T$ in time (see the
next section for details). The Legendre polynomials are computed 
by using their three term recurrence formula as described in 
\cite{numerical_recipes}, and are normalized afterwards while 
the Fourier series is evaluated efficiently by the fast Fourier transform.

The gradient-based method of our choice is the quasi Newton 
method, updated by the inverse BFGS rule without damping,
cf.~\cite{Geiger_Kanzow_unres}. A second order line search is 
applied to find an appropriate step size in the quasi Newton method. 
For an overview of possible other optimization algorithms 
in general, see \cite{Dennis_Schnabel, Fletcher}.

%====================================================================
\section{Solving parabolic boundary value problems}\label{sec.solving_parabolic_BVP}
%====================================================================
We briefly describe the numerical method for solving the state and
adjoint equation by using a boundary integral formulation. Since this is
the approach that was already taken in~\cite{HHTauschCat} for a fixed
boundary, we focus in this section on the changes for the time
dependent case.

Both, the state and the adjoint equation, are Dirichlet problems of 
the heat equation with homogeneous initial conditions. In the case 
of the adjoint equation this becomes apparent after the change of 
variables $t \mapsto T - t$.

The boundary integral approach has distinct advantages over domain
based approaches, because it is not necessary to mesh a time dependent
domain or consider the transported problem in a cylindrical domain. 
Instead, we solve the Green's integral equation.  For a
time-dependent boundary, it has the form
\begin{equation}\label{eq:greenrep}
%================================================
\frac{1}{2} \phi(t,\xbfm) = \V \gamma_1^-\phi(t,\xbfm)
               - \K \phi(t,\xbfm), 
\quad (t,\xbfm) \in \Sigma_T \cup \Sigma^f.
\end{equation}
Here, $\V$ and $\K$ are the thermal single and double layer operators
defined below, and $\phi$ is a solution to the source-free heat equation 
with homogeneous initial conditions. Time dependence of the surface 
appears in the normal trace, which is defined as
\begin{equation}\label{normaltrace}
%================================================
\gamma_1^\pm \phi 
:= \frac{\partial \phi}{\partial \nbfm}  \mp \frac{1}{2}\langle{\bf V},{\bf n}\rangle\phi,
\end{equation}
where $\langle{\bf V},{\bf n}\rangle$ is the normal velocity of the
surface. The extra term in the definition of $\gamma_1^\pm$ arises
from the Reynolds transport theorem in the derivation of
\eqref{eq:greenrep}. Details can be found in \cite{tausch18}.

For the discretization of \eqref{eq:greenrep}, it is desirable to have
a method that can be easily adapted to time-dependent geometries,
hence we use the Nystr\"om discretization method of
\cite{tausch09a,tausch18}. To that end, we write the thermal layer
potentials in the form
\begin{align}
\V\phi(t,\xbfm) &= \frac{1}{\sqrt{4\pi}}\int\limits_0^t 
      \frac{1}{\sqrt{t-\tau}} V\phi (t, \tau, \xbfm)
      \d\tau,\label{def:V} \\
\K\phi(t,\xbfm) &= \frac{1}{\sqrt{4\pi}}\int\limits_0^t 
      \frac{1}{\sqrt{t-\tau}} K\phi (t,\tau, \xbfm)
      \d\tau,\label{def:K}
\end{align}
where
\begin{align}
V\phi(t,\tau,\xbfm) &= \int\limits_{\Gamma_\tau\cup\Gamma^f} \frac{1}{(4\pi(t-\tau))^{\frac{d}{2}}} 
  \exp\left( -\frac{\|\xbfm - \ybfm\|^2}{4 (t-\tau)}\right)
  \phi(\ybfm,\tau)\d\sigma_{\bf y}, \label{def:poissonV}\\
K\phi(t,\tau,\xbfm) &= \int\limits_{\Gamma_\tau\cup\Gamma^f} \frac{1}{(4\pi(t-\tau))^{\frac{d}{2}}}
 \,\gamma_{1,y}^+\left[ \exp\left( -\frac{\|\xbfm - \ybfm\|^2}{4 (t-\tau)}\right)\right]
  \phi(\ybfm,\tau)\d\sigma_{\bf y},\label{def:poissonK}
\end{align}
and $\Gamma_\tau\cup\Gamma^f = \partial\Omega_\tau$, i.e., the union of the free
and the fixed boundary. 

The kernel in the above time-dependent surface potentials is the
Green's function of the ($d-1$)-dimensional heat equation. Thus, 
they may be regarded as Poisson-Weierstrass integrals defined 
on a surface instead of the usual plane. As in the planar case, these
integrals are smooth functions in all variables when $0\leq \tau\leq t$.
The limiting behavior of these functions as $\tau\to t$ is
\begin{equation}\label{eq.asymptotics}
\begin{aligned}
%================================================
V\phi(t, \tau,\xbfm)
 &= \phi(t, \tau,\xbfm) + \mathcal{O}(t), \\
K\phi(t, \tau,\xbfm)
 &= H(t, \xbfm) \phi(\xbfm) + \mathcal{O}(t) ,
\end{aligned}
\end{equation}
where $H(\cdot)$ is the mean curvature of the surface 
$\Gamma_t\cup\Gamma^f$, see~\cite{tausch18}.

Since the functions $V\phi$ and $K\phi$ are smooth, the integral operators in 
\eqref{def:V} and \eqref{def:K} have a $(t-\tau)^{-1/2}$
singularity, which suggests to use the trapezoidal rule with a
singularity correction at the endpoint $t=\tau$. It is
shown in \cite{tausch09a} that the rule
\begin{equation}\label{trapezoidal:desing0}
%================================================
\V\phi(\xbfm,t_n) = {h\over \sqrt{4\pi}} \sum_{j=0}^{n-1}{\!'}
{1\over \sqrt{t_n-t_j}} V(t_n,t_j) \phi(\xbfm,t_j) + 
     \mu_n \psi(\xbfm,t_n) + \epsilon_h,
\end{equation}
where $h$ is the time step length, $t_j = hj$ and
$$
\mu_n = \sqrt{t_n\over \pi} - 
{h\over \sqrt{4\pi}}\sum_{j=0}^{n-1}{\!'} {1\over \sqrt{t_n-t_j}},
$$
has a quadrature error of $\epsilon_h =  \mathcal{O}(h^{3/2})$. Here, 
the prime at the summation sign indicates that the $j=0$ term in the sum
is multiplied by the factor $1/2$. For the double layer analogous result
holds when the $\mu_n$-term is mulitplied by the curvature.  A fully
discrete version is obtained by approximating the surface integrals in
\eqref{def:poissonV} and \eqref{def:poissonK} by a surface quadrature
rule, usually a composite rule that integrates polynomials on
triangular patches exactly. If the spatial mesh width $h_s$ satisfies
$\sqrt{h_s}\sim h$ and the spatial rule has at least degree of
precision two then the quadrature error in \eqref{trapezoidal:desing0}
can be preserved, see \cite{tausch09a}. In the time dependent case, these
rules are constructed on $\Gamma_0\cup\Gamma^f$ and then mapped 
to $\Gamma_t\cup\Gamma^f$.

For the state equation, the solution is smooth and the Nystr\"om method
based on the above quadrature is used to computed the normal trace of 
the solution. Thus the Neumann data at the quadrature nodes is computed 
from \eqref{eq:greenrep} by substituting the given Dirichlet data of 
\eqref{eq.cat_diff_eq}. This gives approximate values of the shape 
functional \eqref{eq.cat_functional_neumann_tracking} and the 
boundary condition in the adjoint state \eqref{eq.adjoint_problem}.

The next task is to compute the Neumann data in the shape gradient
\eqref{eq.shape_grad_neumann_tracking} by solving the adjoint state. 
As already observed in \cite{HHTauschCat}, the adjoint equation (after
time transformation $t \mapsto T-t$) has a singularity at $\tau=0$
because the homogeneous initial condition is not compatible with the
in general non vanishing Dirichlet condition at $t=0$. 

It can be concluded from
\eqref{eq.asymptotics} and Green's integral equation that the Neumann
data has a $t^{-1/2}$-singularity at $t=0$. To preserve the
$\mathcal{O}(h^{\frac{3}{2}})$ accuracy, the time quadrature
rule \eqref{trapezoidal:desing0} must be modified with singularity
corrections on both endpoints. Since the normal velocity of the
boundary does not appear in \eqref{eq.asymptotics}, the derivation 
and the weights of this rule are identical to the case of a steady
boundary. Since this can be found in \cite{HHTauschCat}, it is not
repeated here.

%====================================================================
\section{Numerical experiments}\label{sec.results}
%====================================================================
The exterior, fixed boundary is chosen as the mantle of the 
cylinder with radius 1, where its height corresponds to the 
time interval $(0,T) = (0,1)$. We choose $N_t = 90$ time 
intervals and, for every time step, $N_{\bf x} = 80$ spatial points.
The void is depicted in Figure~\ref{fig.shape_desired}, where
the time corresponds to the $z$-axis. It is discretized by the 
same number of time intervals and spatial points as the 
exterior boundary.
 
\begin{figure}[htb]
\centering
\includegraphics[width=0.7\textwidth]{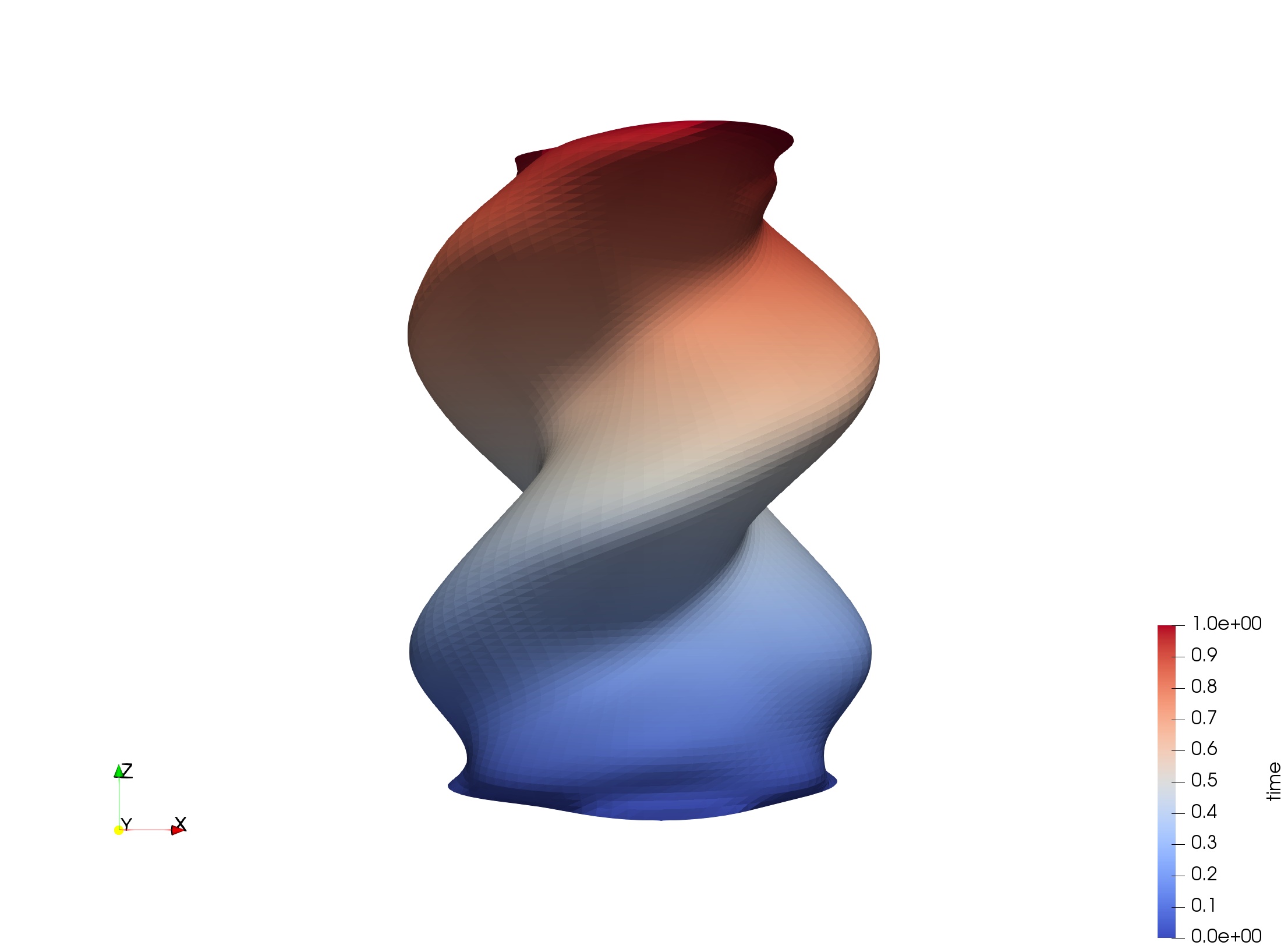}
\caption{\label{fig.shape_desired} Given inclusion in space and time.}
\end{figure}

We first solve the forward problem to construct the desired Neumann
data $g$. To this end, we choose the desired shape found in
Figure~\ref{fig.shape_desired} and choose the Dirichlet data
$f(t,\cdot) = t$, which matches with the initial data $u(0,\cdot) = 0$
in $\Omega_0$. In order to avoid an inverse crime, we use an 
indirect boundary element approach by solving the thermal single 
layer equation and then recover the Neumann data by applying
the thermal adjoint operator. In addition, we add $1\%$ random 
noise to the synthetic data.

Now, we can tackle the inverse problem. For the parametrization 
of the interior boundary, we choose $16$ Fourier coefficients in 
space ($N_K=8)$ and $10$ Legendre polynomials in time ($N_L=9$), 
leading to $160$ design parameters in total. As an initial guess for 
the free inner boundary, we choose the cylinder of radius $0.3$. 
We perform 100 iterations in the optimization procedure and use a 
quasi Newton method updated by the limited memory inverse BFGS 
rule, where 10 updates are stored, see \cite{Nocedal_BFGS} for example.

\pgfplotstableread{grad.txt}
\datatablegrad%                                                                                                                                         
\pgfplotstableread{func.txt}
\datatablefunct%                                                                                                                                         
\pgfplotstableread{err.txt}
\datatableerr%                                                                                                                                         
\begin{minipage}[hbt]{\textwidth}
\centering
\begin{tikzpicture}[baseline=(current axis.south)]
  \begin{axis}[width=0.5\textwidth, title={Value of functional}, xlabel={Iteration}]
    \addplot +[mark=none] table[x expr=\coordindex+1, y index=0] from \datatablefunct;
    %\addlegendentry{Value of functional};
  \end{axis}
\end{tikzpicture}
\quad
\begin{tikzpicture}[baseline=(current axis.south)]
  \begin{axis}[width=0.5\textwidth, title={$\ell^{\infty}$-norm of gradient}, xlabel={Iteration}]
    \addplot +[mark=none] table[x expr=\coordindex+1, y index=0] from \datatablegrad;
    %\addlegendentry{$\ell^{\infty}$-norm of gradient};
  \end{axis}
\end{tikzpicture}
\captionof{figure}{\label{fig.funct_and_grad}}The histories of the 
functional (\emph{left}) and of the shape gradient (\emph{right}). 
\end{minipage}

In Figure~\ref{fig.funct_and_grad} on the left, the evolution of the 
shape gradient during the course of the minimization algorithm is 
shown, while on the right the evolution of the functional is displayed.
In Figure~\ref{fig.l2_error}, we can see the $\ell^2$-error in the shape 
coefficients corresponding to the shape error. We clearly observe 
convergence of the minimization algorithm. 

\begin{minipage}[hbt]{\textwidth}
\centering
\begin{tikzpicture}[baseline=(current axis.south)]
  \begin{axis}[extra y ticks={0.08}, width=0.5\textwidth, title={$\ell^2$-error of the shape}, xlabel={Iteration}]
    \addplot +[mark=none] table[x expr=\coordindex+1, y index=0] from \datatableerr;
    %\addlegendentry{Value of functional};
  \end{axis}
\end{tikzpicture}

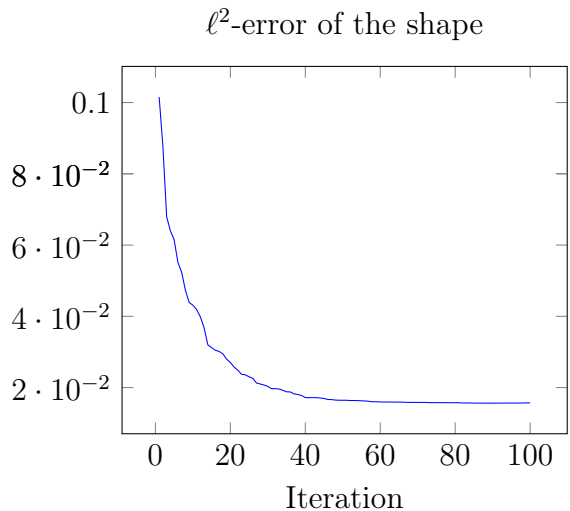
\captionof{figure}{\label{fig.l2_error} $\ell^2$-error 
of the shape coefficients corresponding to the difference 
in the shapes.}
\end{minipage}

In Figure~\ref{fig.shape_output}, we present the final reconstruction 
of the shape, where the wireframe corresponds to the exact shape
and the solid shape is its reconstruction. It can be figured out that 
the final reconstruction of the shape is not very good at the starting 
time $t=0$ and the stopping time $T=1$. But in between, the shape 
is very well reconstructed. 

\begin{figure}[htb]
\centering
\begin{subfigure}[t]{.25\textwidth}
  \centering
  \includegraphics[width=\linewidth, trim={13cm 3cm 13cm 3cm},clip]{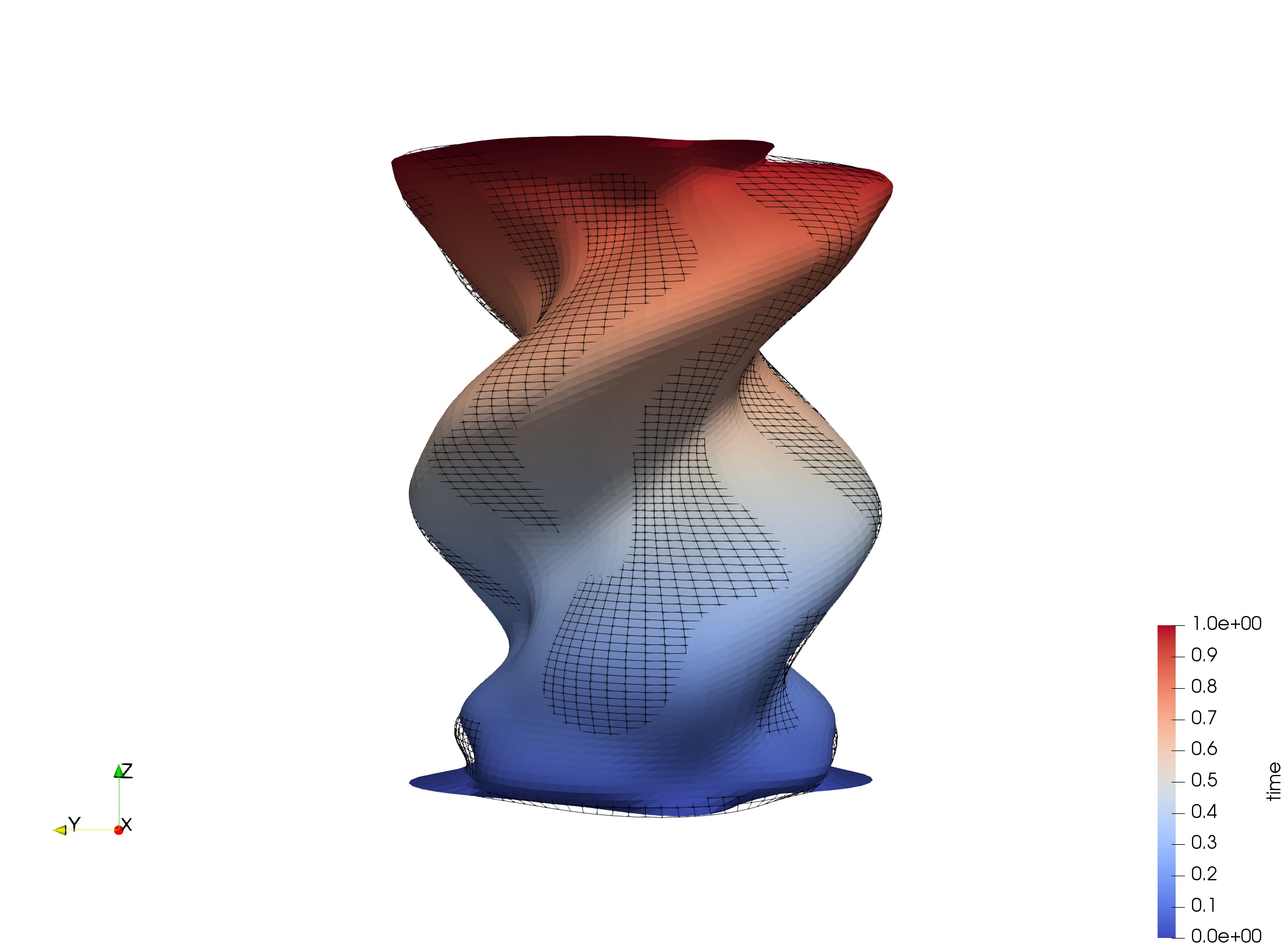}
  \caption{View with the\\ $x$-axis in front.}
  \label{pic.meshgen2_eliminate_flip}
\end{subfigure}\quad
\begin{subfigure}[t]{.25\textwidth}
  \centering
  \includegraphics[width=\linewidth, trim={13cm 3cm 13cm 3cm},clip]{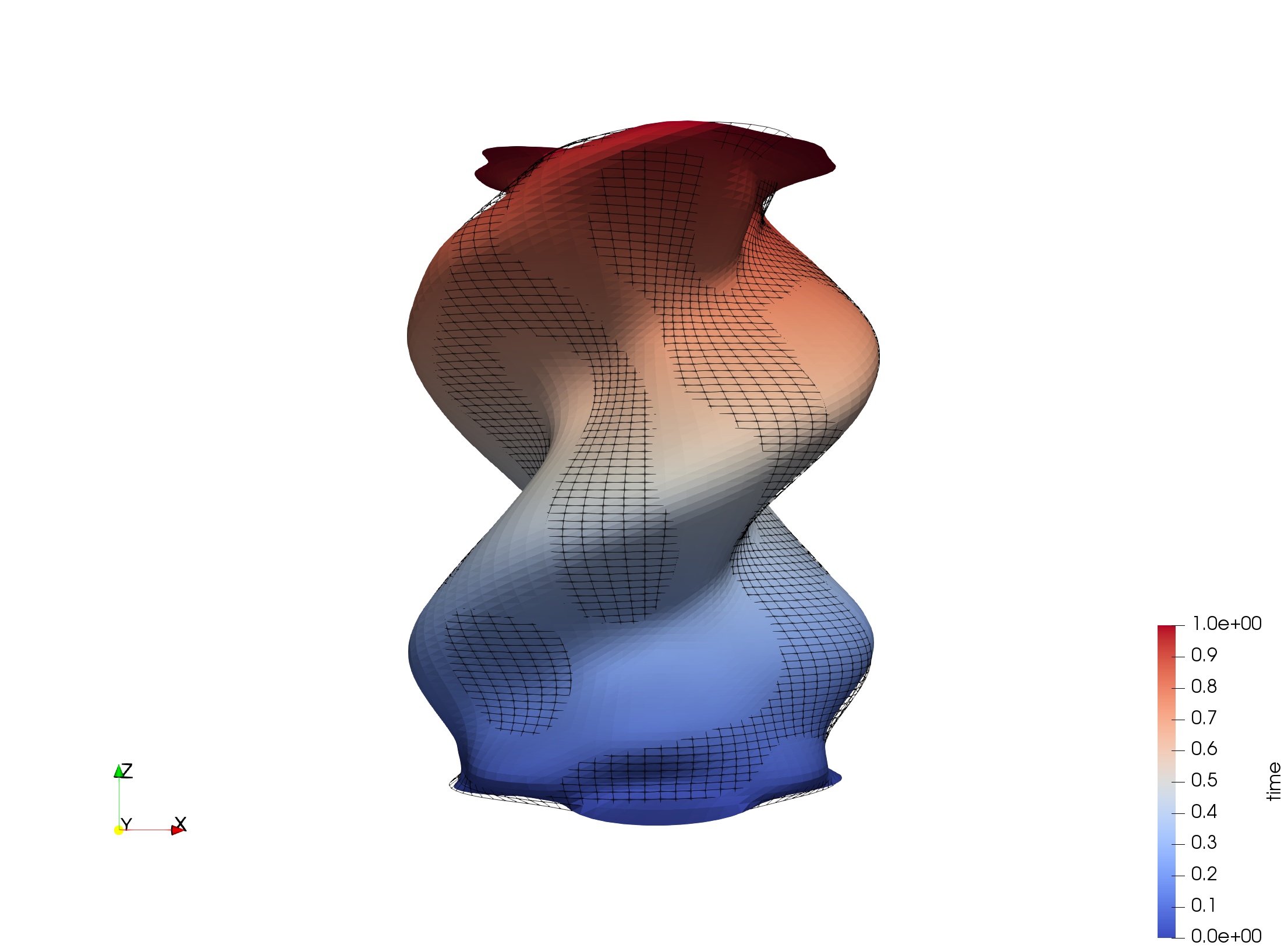}
  \caption{View with the\\ $y$-axis in front.}
  \label{pic.meshgen2_eliminate}
\end{subfigure}\quad
\begin{subfigure}[t]{.25\textwidth}
  \centering
  \includegraphics[width=\linewidth, trim={13cm 3cm 12cm 3cm},clip]{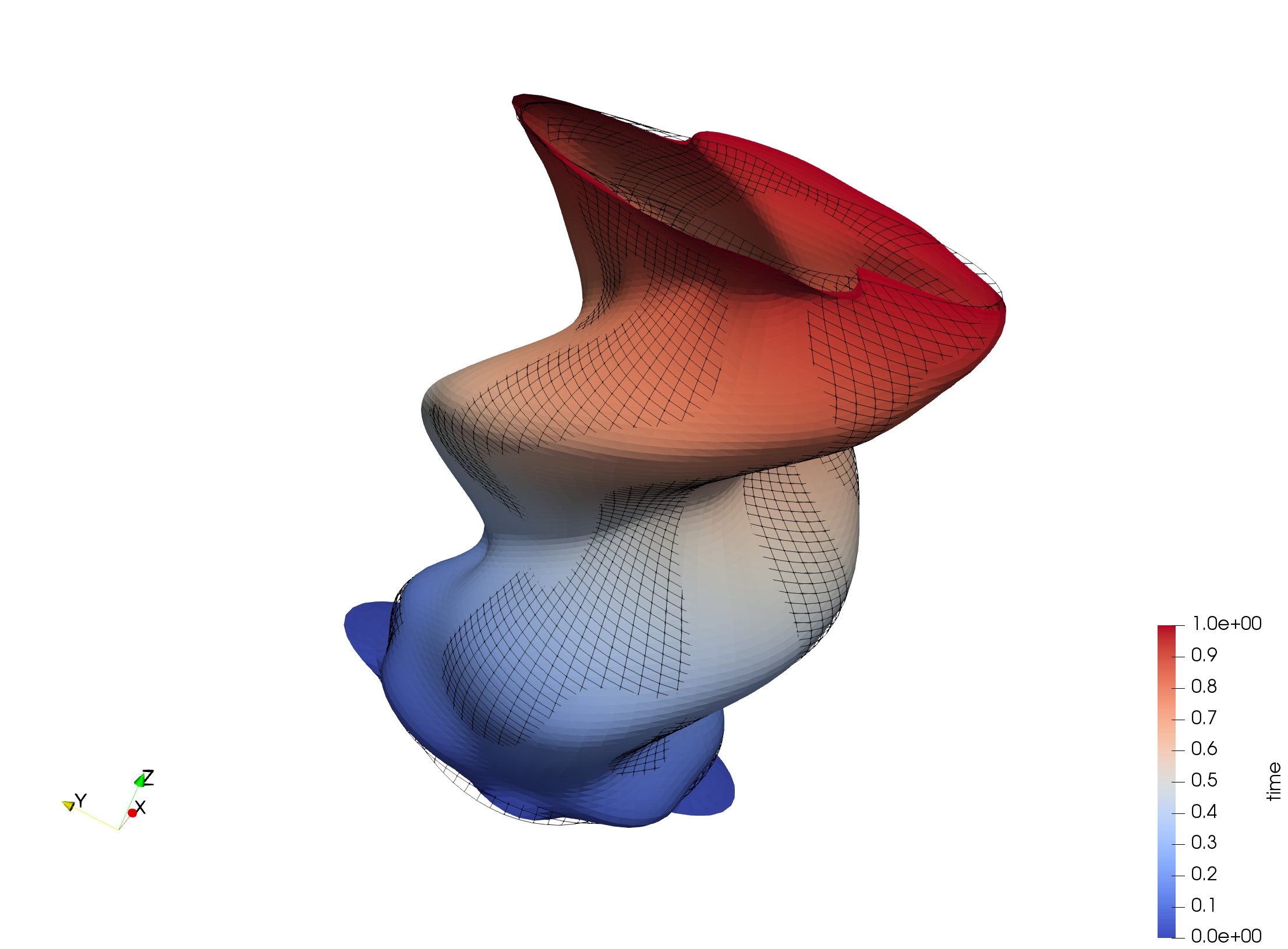}
  \caption{Three-dimensional view. }
  \label{pic.meshgen2_eliminate_flip_eliminate_centroid}
\end{subfigure}
\caption{\label{fig.shape_output} The desired shape as 
a wireframe together with the reconstructed shape in solid. 
The time corresponds to the $z$-axis.}
\end{figure}

%====================================================================
\section{Conclusion}\label{sec.conclusion}
%====================================================================
In this article, we solved a time-dependent shape 
reconstruction problem by means of shape optimization.
We computed the shape derivative of the tracking-type 
functional for the Neumann data with the help of the 
perturbation of identity. It turned out that this shape derivative 
coincides with the one when the void is time-indepen\-dent. 
We also demonstrated by numerical experiments that it is indeed 
possible to reconstruct a time-dependent shape by the proposed 
approach. By restricting to star-shaped voids, we have been able 
to compute the error between the desired shape and the reconstructed 
shape. The convergence of the minimization algorithm has clearly 
been observed.

%====================================================================
\appendix
%====================================================================
\section{Local shape derivative}\label{sec.proof_local_shape}
%====================================================================
The proof of the local shape derivative follows the lines of 
\cite{Chapko_et_al}. We state here the adjustment to the 
time-dependent setting.

We first present two general lemmas, which are used later. 
We consider a mapping $\xibfm$, which maps a domain 
$\Omega_{\tau}$ to a domain $\Omega_{\varsigma}$ and 
satisfies a uniformity condition as in \eqref{eq.uniformity_cond}. 
We will use the lemmas for $\xibfm = \kabfm$ and $\xibfm = \mathbf{I} + s \Zbfm$. 
Let us denote $Q_{\tau} = \cup_{ \tau} \lbrace \tau \rbrace \times \Omega_{\tau}$ 
and analogously for $Q_{\varsigma}$ and the lateral area by 
$\Sigma_{\tau}$ or $\Sigma_{\varsigma}$, respectively.

\begin{lemma} \label{lem.id_stefan}
For $v$ smooth enough it holds
\begin{equation}\label{eq.id_stefan1}
%==============================================
   (\nabla v) \circ \xibfm
   	= (\on{D} \xibfm)^{-\intercal} \nabla(v \circ \xibfm) 
\end{equation}
and
\begin{equation}\label{eq.id_stefan2}
%==============================================
\begin{aligned}
(\partial_t v)& \circ \xibfm = \partial_t \big(v \circ \xibfm\big) 
	- \big(\on{D} \xibfm\big)^{-\intercal}
		\nabla\big(v \circ \xibfm\big) \cdot \partial_t \xibfm.
\end{aligned}
\end{equation}
\end{lemma}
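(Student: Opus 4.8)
The plan is to reduce both identities to the multivariate chain rule, the only care being to distinguish the partial time derivative $\partial_t v$, taken with the spatial slot of $v$ frozen, from the total time derivative of the composition $v\circ\xibfm$, and to keep in mind that $\xibfm$ acts only on the spatial variable. I would write $w := v\circ\xibfm$, so that $w(t,\xbfm) = v\big(t,\xibfm(t,\xbfm)\big)$, and note that since $v$ is smooth enough and $\xibfm$ is a $C^2$-diffeomorphism satisfying a uniformity condition as in \eqref{eq.uniformity_cond}, all the derivatives below exist classically and $\on{D}\xibfm$ is invertible by Remark~\ref{rem.singular_val}.

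For \eqref{eq.id_stefan1} I would differentiate $w$ in the spatial variable only. Componentwise the chain rule gives $\partial_{x_i} w = \sum_j \big((\partial_{y_j} v)\circ\xibfm\big)\,\partial_{x_i}\xi_j$, which in matrix form reads $\nabla(v\circ\xibfm) = (\on{D}\xibfm)^{\intercal}\,\big((\nabla v)\circ\xibfm\big)$. Multiplying on the left by $(\on{D}\xibfm)^{-\intercal}$, whose existence is guaranteed by the positivity of $\det(\on{D}\xibfm)$ recorded in Remark~\ref{rem.singular_val}, yields \eqref{eq.id_stefan1}.

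For \eqref{eq.id_stefan2} I would differentiate $w$ in time, now accounting for the fact that $t$ enters $v$ both through its explicit first argument and through $\xibfm$. The chain rule gives
\[
  \partial_t(v\circ\xibfm) = (\partial_t v)\circ\xibfm + \big((\nabla v)\circ\xibfm\big)\cdot\partial_t\xibfm.
\]
Solving for $(\partial_t v)\circ\xibfm$ and inserting the expression for $(\nabla v)\circ\xibfm$ already obtained in \eqref{eq.id_stefan1} produces exactly \eqref{eq.id_stefan2}.

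No genuine analytic obstacle arises here: the statement is essentially an exercise in the chain rule, and the $C^2$-regularity of $\xibfm$ together with the smoothness assumed on $v$ ensures that every derivative is classical and that the order of differentiation is immaterial. The only point that requires attention — and the place where a sign or a transpose can slip — is the bookkeeping of which time dependence is being differentiated, and the transpose convention in passing from the componentwise chain rule to the matrix identity \eqref{eq.id_stefan1}.
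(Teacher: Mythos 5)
Your proposal is correct and coincides with the paper's own argument: both identities are obtained from the componentwise chain rule, with \eqref{eq.id_stefan1} following by inverting $(\on{D} \xibfm)^{\intercal}$ and \eqref{eq.id_stefan2} by differentiating $v\circ\xibfm$ in time and substituting \eqref{eq.id_stefan1}. Nothing is missing.
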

\begin{proof}
By the chain rule, we can compute
\[ 
  \nabla\big(v \circ \xibfm \big) 
	=  (\on{D}\xibfm )^{\intercal}(\nabla v)\circ\xibfm
\]
from where \eqref{eq.id_stefan1} follows immediately. 
Moreover, the multivariable chain rule yields
\begin{align*}
\partial_t \big(v \circ \xibfm \big) 
	= (\partial_t v) \circ \xibfm
	+ (\nabla v) \circ \xibfm \cdot \partial_t \xibfm,
\end{align*}
since only the spatial component is affected by the composition 
with $\xibfm$. Using \eqref{eq.id_stefan1}, we get \eqref{eq.id_stefan2}.
\end{proof}
Notice that the identities \eqref{eq.id_stefan1} and \eqref{eq.id_stefan2}
are also stated in \cite[pg.~43]{Moubachir} in the setting of the 
speed method. 

\begin{lemma}
\label{lem.transport_eq_back_0}
Let $v\in \hat{H}^{1,\frac12}(Q_T)$ and $\varphi\in \tilde{H}_0^{1,\frac12}(Q_T)$.
Then, the transport of
\begin{equation}
\label{eq.general_weak_form}
S(v,\varphi):= \int_0^T \int_{\Omega_{\varsigma}} \{\nabla v \cdot \nabla \varphi + \partial_t v \varphi\} \, \drm \xbfm \drm t 
= \int_0^T \int_{\Omega_{\varsigma}} h \varphi \, \drm \xbfm \drm t
\end{equation}
from $Q_{\varsigma}$ to $Q_{\tau}$ gives the parabolic problem
\begin{align}
\label{eq.parabolic_with_a}
\int_0^T \int_{\Omega_{\tau}} \partial_t v^{\tau, \varsigma} \varphi^{\tau, \varsigma}\, \drm \xbfm \drm t 
+ \int_0^T a(t; v^{\tau, \varsigma}, \varphi^{\tau, \varsigma}) \, \drm t 
= \int_0^T \int_{\Omega_{\tau}} h^t \varphi^{\tau, \varsigma} \, \drm \xbfm \drm t
\end{align}
with
\begin{align*}
a(t; v^{\tau, \varsigma}, \varphi^{\tau, \varsigma}) &:= \int_{\Omega_{\tau}} \langle (\on{D} \xibfm)^{-\intercal} \nabla v^{\tau, \varsigma} , (\on{D} \xibfm)^{-\intercal} \nabla \varphi^{\tau, \varsigma} \rangle \, \drm \xbfm \\
&\quad -\int_{\Omega_{\tau}} \langle (\on{D} \xibfm)^{-\intercal} \nabla v^{\tau, \varsigma}, \partial_t \xibfm \varphi ^t \rangle \, \drm \xbfm \\
&\quad - \int_{\Omega_{\tau}} \langle (\on{D} \xibfm)^{-\intercal}  \frac{1}{\det (\on{D} \xibfm)} \nabla \big( \det (\on{D} \xibfm)\big) \varphi^{\tau, \varsigma}, (\on{D} \xibfm)^{-\intercal}  \nabla v^{\tau, \varsigma}  \rangle \, \drm \xbfm , \nonumber \\
\end{align*}
where $v^{\tau, \varsigma} = v \circ \xibfm$ and similarly for $\varphi^{\tau, \varsigma}$ and $h^{\tau, \varsigma}$.
\end{lemma}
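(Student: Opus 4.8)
The plan is to transport the underlying heat equation, rather than the integrated weak form directly, from the current configuration to the reference tube $Q_{\tau}$ via the spatial substitution $\xbfm\mapsto\xibfm(t,\cdot)$, and only then to pair it with a test function. Applying Lemma \ref{lem.id_stefan} to $(\partial_t-\Delta)v=h$ composed with $\xibfm$, identity \eqref{eq.id_stefan2} splits the parabolic part into the genuine time derivative $\partial_t v^{\tau,\varsigma}$ of $v^{\tau,\varsigma}=v\circ\xibfm$ and the convective correction $-\langle(\on{D}\xibfm)^{-\intercal}\nabla v^{\tau,\varsigma},\partial_t\xibfm\rangle$, the latter becoming, after multiplication by $\varphi^{\tau,\varsigma}=\varphi\circ\xibfm$, exactly the second term of $a$. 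For the elliptic part I would use \eqref{eq.id_stefan1} to obtain the pull-back of the Laplacian in divergence form, $(\Delta v)\circ\xibfm=\det(\on{D}\xibfm)^{-1}\div\!\big(\det(\on{D}\xibfm)(\on{D}\xibfm)^{-1}(\on{D}\xibfm)^{-\intercal}\nabla v^{\tau,\varsigma}\big)$.

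The heart of the matter is the weak pairing, where the Jacobian weight must be distributed so that neither the leading $L^2$ time pairing nor the first two terms of $a$ retain an explicit $\det(\on{D}\xibfm)$, while the third term of $a$, carrying $\det(\on{D}\xibfm)^{-1}\nabla\det(\on{D}\xibfm)$, is generated. I would test the transported strong equation against $\varphi^{\tau,\varsigma}$ with respect to the \emph{unweighted} measure $\drm\xbfm$ and integrate the divergence term by parts in space. The product rule then forces the spatial derivative either onto $(\on{D}\xibfm)^{-1}(\on{D}\xibfm)^{-\intercal}\nabla v^{\tau,\varsigma}$, producing the anisotropic diffusion term $\langle(\on{D}\xibfm)^{-\intercal}\nabla v^{\tau,\varsigma},(\on{D}\xibfm)^{-\intercal}\nabla\varphi^{\tau,\varsigma}\rangle$, or onto the weight $\det(\on{D}\xibfm)$, producing precisely the remaining logarithmic-derivative term of $a$. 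Because Remark \ref{rem.singular_val} bounds $\det(\on{D}\xibfm)$ away from $0$ and $\infty$, the unweighted test space is isomorphic to the weighted one, so this formulation is equivalent to \eqref{eq.general_weak_form}.

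The boundary contributions from the integration by parts vanish because $\varphi^{\tau,\varsigma}$ inherits zero spatial trace on the whole of $\Sigma_{\tau}$ from $\varphi\in\tilde{H}_0^{1,\frac12}$; the right-hand side transforms consistently to $\int_0^T\int_{\Omega_{\tau}}h^{t}\varphi^{\tau,\varsigma}\,\drm\xbfm\,\drm t$, again without a Jacobian. I expect the main obstacle to be exactly this bookkeeping of the Jacobian weight together with the justification of the spatial integration by parts in the anisotropic Sobolev setting: I would first carry out the computation for smooth $v$ and $\varphi$, for instance in the dense subspace $V(Q_0)$, and then pass to the limit using the norm equivalence \eqref{eq.equivalence_norm} and the uniform bounds of Remark \ref{rem.singular_val}.
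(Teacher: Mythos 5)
Your proposal is correct and follows essentially the same route as the paper's proof: pull back via Lemma \ref{lem.id_stefan}, pass through the transported divergence-form equation, divide out the Jacobian weight (equivalently, test with the unweighted measure), and recover the $\det(\on{D}\xibfm)^{-1}\nabla\det(\on{D}\xibfm)$ term from the product rule in the ensuing integration by parts. The only cosmetic difference is that the paper starts from the weak form on $Q_{\varsigma}$ and derives the transported strong form as an intermediate step, whereas you transport the strong equation directly; the identities, the bookkeeping of the weight, and the final re-testing are the same.
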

\begin{proof}
With the aid of Lemma \ref{lem.id_stefan}, the transport of \eqref{eq.general_weak_form} 
from $Q_{\varsigma}$ onto $Q_{\tau}$ gives
\begin{align*}
& \int_0^T \int_{\Omega_{\tau}} \det (\on{D} \xibfm) (\on{D} \xibfm)^{- \intercal} \nabla (v \circ \xibfm) \cdot (\on{D} \xibfm)^{-\intercal} \nabla (\varphi \circ \xibfm) \, \drm \xbfm \drm t \\
&\quad+ \int_0^T \int_{\Omega_{\tau}} \det (\on{D} \xibfm) \Big[\partial_t ( \varphi \circ \xibfm ) v \circ \xibfm - (\on{D} \xibfm) ^{-\intercal} \nabla (v \circ \xibfm)  \cdot \partial_t \xibfm (\varphi \circ \xibfm)  \Big] \, \drm \xbfm \drm t\\
&\quad\quad = \int_0^T \int_{\Omega_{\tau}} \det (\on{D} \xibfm) (h \circ \xibfm) (\varphi \circ \xibfm) \, \drm \xbfm \drm t.
\end{align*}
Using Green's first identity and the zero boundary condition yields
\begin{align*}
&\int_0^T \int_{\Omega_{\tau}} - \on{div} \big( \det (\on{D \xibfm}) (\on{D} \xibfm)^{-1} (\on{D} \xibfm)^{- \intercal} \nabla (v \circ \xibfm) \big) (\varphi \circ \xibfm) \, \drm \xbfm \drm t\\
&\quad+ \int_0^T \int_{\Omega_{\tau}} \det (\on{D} \xibfm) \Big[ \partial_t (v \circ \xibfm) - (\on{D} \xibfm)^{-\intercal} \nabla (v \circ \xibfm) \cdot \partial_t \xibfm \Big] (\varphi \circ \xibfm) \, \drm \xbfm \drm t\\
& \quad\quad = \int_0^T \int_{\Omega_{\tau}} \det (\on{D} \xibfm) (h \circ \xibfm) (\varphi \circ \xibfm) \, \drm \xbfm \drm t.
\end{align*}
Thus, in the strong formulation, we have when dividing by $\det(\on{D} \xibfm)$ that
\begin{equation*}%\label{eq.heat_transport}
%==================================
\begin{aligned}
& -\frac{1}{\det (\on{D} \xibfm)} \on{div} \Big( \det (\on{D} \xibfm) (\on{D} \xibfm)^{-1} (\on{D} \xibfm)^{- \intercal} \nabla (v \circ \xibfm) \Big) \\
&\hspace*{2cm}  + \partial_t (v \circ \xibfm) - (\on{D} \xibfm)^{-\intercal} \nabla (v \circ \xibfm) \cdot \partial_t \xibfm = h \circ \xibfm \ \ \text{in } Q_{\tau}.
\end{aligned}
\end{equation*}
Rewriting gives
\begin{align*}
& -\on{div} \Big( (\on{D} \xibfm)^{-1} (\on{D} \xibfm)^{-\intercal} \nabla v^{\tau, \varsigma} \Big) +  \partial_t v^{\tau, \varsigma} - (\on{D} \xibfm)^{-\intercal} \nabla v^{\tau, \varsigma} \cdot \partial_t \xibfm\\
&\hspace*{2cm} - \frac{1}{\det (\on{D} \xibfm)} \nabla \big(\det (\on{D} \xibfm) \big) \cdot (\on{D} \xibfm)^{-1} (\on{D} \xibfm)^{- \intercal} \nabla v^{\tau, \varsigma} = h^{\tau, \varsigma}\ \ \text{in } Q_{\tau}.
\end{align*}
Testing again with a function $\varphi^{\tau, \varsigma}$ gives the weak formulation
\begin{align*}
&\int_0^T \int_{\Omega_{\tau}} - \on{div} \Big( (\on{D} \xibfm)^{-1} (\on{D} \xibfm)^{-\intercal} \nabla v^{\tau, \varsigma} \Big) \varphi^{\tau, \varsigma} \, \drm \xbfm \drm t + \int_0^T \int_{\Omega_{\tau}} \partial_t v^{\tau, \varsigma} \varphi^{\tau, \varsigma} \, \drm \xbfm \drm t \\
&\quad- \int_0^T \int_{\Omega_{\tau}} (\on{D} \xibfm)^{-\intercal} \nabla v^{\tau, \varsigma} \cdot \partial_t \xibfm \varphi^{\tau, \varsigma} \, \drm \xbfm \drm t \\
&\quad- \int_0^T \int_{\Omega_{\tau}} \frac{1}{\det (\on{D} \xibfm)} \nabla \big( \det (\on{D} \xibfm)\big) \cdot (\on{D} \xibfm)^{-1} (\on{D} \xibfm)^{-\intercal} \nabla v^{\tau, \varsigma} \varphi^{\tau, \varsigma} \, \drm \xbfm \drm t\\ 
&\quad\quad= \int_0^T \int_{\Omega_{\tau}} h^{\tau, \varsigma}  \varphi^{\tau, \varsigma} \, \drm \xbfm \drm t,
\end{align*}
which can be reformulated by using the divergence theorem with vanishing boundary terms to
\begin{equation}\label{eq.weak_formulation_transported_back}
%================================================
\begin{aligned}
&\int_0^T \int_{\Omega_{\tau}} (\on{D} \xibfm)^{-\intercal} \nabla v^{\tau, \varsigma} \cdot (\on{D} \xibfm)^{-\intercal} \nabla \varphi^{\tau, \varsigma} \, \drm \xbfm \drm t+ \int_0^T \int_{\Omega_{\tau}} \partial_t v^{\tau, \varsigma} \varphi^{\tau, \varsigma} \, \drm \xbfm \drm t\\
&\quad 
-\int_0^T \int_{\Omega_{\tau}} \langle (\on{D} \xibfm)^{-\intercal} \nabla v^{\tau, \varsigma}, \partial_t \xibfm \varphi ^t \rangle \, \drm \xbfm \drm t \\
&\quad- \int_0^T \int_{\Omega_{\tau}} \langle (\on{D} \xibfm)^{-\intercal}  \frac{1}{\det (\on{D} \xibfm)} \nabla \big( \det (\on{D} \xibfm)\big) \varphi^{\tau, \varsigma}, (\on{D} \xibfm)^{-\intercal}  \nabla v^{\tau, \varsigma}  \rangle \, \drm \xbfm \drm t\\
&\quad\quad= \int_0^T \int_{\Omega_{\tau}} h^{\tau, \varsigma} \varphi^{\tau, \varsigma}  \, \drm \xbfm \drm t. 
\end{aligned}
\end{equation}
From here, the claim follows immediately.
\end{proof}

In order to compute the local shape derivative, we first introduce the 
material derivative to \eqref{eq.cat_diff_eq} as stated in the 
following lemma.

\begin{lemma} \label{lem.material_derivative}
The material derivative of \eqref{eq.cat_diff_eq}, 
which is defined as the limit
\[ 
  \dot{v} : = \lim_{s \to 0} \frac{v^{t,s} - v}{s},
\]
exists in $\hat{H}^{1, \frac{1}{2}}_0 (Q_T)$ and satisfies
\begin{equation}\label{eq.cat_weak_formulation_pure_dirichlet_material}
%===========================================================
  S(\dot{v},\varphi) = G(\varphi)\ \text{for all}\ \varphi \in \tilde{H}_0^{1, \frac{1}{2}} (Q_T),  
\end{equation}
where $S$ is given by \eqref{eq.weak_form_pde} and
\begin{equation} \label{eq.rhs_material_der}
  G(\varphi ) = \int_0^T \int_{\Omega_t} \big\{(\on{D}\Zbfm + \on{D}\Zbfm^{\intercal}) 
  	\nabla v \cdot \nabla \varphi + \varphi \nabla(\on{div}\Zbfm) \cdot \nabla v 
		+ (\partial_t \Zbfm) \cdot \nabla v \varphi\big\}\, \drm \xbfm \drm t. 
\end{equation}
\end{lemma}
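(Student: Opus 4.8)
The plan is to transport the weak formulation of the perturbed state back to the fixed tube $Q_T$, subtract the unperturbed equation, divide by $s$, and pass to the limit $s \to 0$. Denote by $v_{t,s}$ the solution of \eqref{eq.cat_diff_eq} on the perturbed tube $Q_T^s$ and by $v^{t,s} = v_{t,s} \circ (\Ibfm + s\Zbfm)$ its pull-back to $Q_T$. Applying Lemma~\ref{lem.transport_eq_back_0} with $\xibfm = \Ibfm + s\Zbfm$, $Q_{\varsigma} = Q_T^s$ and $Q_{\tau} = Q_T$ (and $h \equiv 0$) turns the perturbed problem into the transported weak form
\[
\int_0^T \int_{\Omega_t} \partial_t v^{t,s}\, \varphi \, \drm\xbfm\,\drm t + \int_0^T a_s(t; v^{t,s}, \varphi) \, \drm t = 0,
\]
valid for every fixed $\varphi \in \tilde{H}^{1,\frac12}_0(Q_T)$, where $a_s$ is the bilinear form of Lemma~\ref{lem.transport_eq_back_0} evaluated at $\xibfm = \Ibfm + s\Zbfm$; here I use that composition with $\Ibfm + s\Zbfm$ is a bijection of the test space, so that $\varphi$ may be kept independent of $s$. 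At $s=0$ one has $\on{D}(\Ibfm+s\Zbfm)=\Ibfm$, unit Jacobian and $\partial_t\xibfm=\mathbf{0}$, hence $a_0(t;v,\varphi)=\int_{\Omega_t}\nabla v\cdot\nabla\varphi$ and the transported equation reduces to $S(v,\varphi)=0$, i.e.\ the unperturbed state \eqref{eq.weak_form_pde}.

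Next I would set $w_s := (v^{t,s}-v)/s$. Since $\Zbfm$ vanishes near $\Sigma^f$ and $\Ibfm+s\Zbfm$ maps $\Gamma_t$ onto the perturbed interior boundary where $v_{t,s}$ vanishes, the pull-back $v^{t,s}$ carries exactly the same Dirichlet data $f$ on $\Sigma^f$, homogeneous data on $\Sigma_T$, and zero initial values as $v$. Hence $w_s \in \hat{H}^{1,\frac12}_0(Q_T)$, which is precisely why the material derivative is sought in this homogeneous space. Subtracting the $s=0$ equation from the transported one, dividing by $s$, and using linearity of $a_s$ in its first argument to write $a_s(v^{t,s},\varphi)-a_s(v,\varphi)=s\,a_s(w_s,\varphi)$, I obtain
\[
\int_0^T\int_{\Omega_t}\partial_t w_s\,\varphi\,\drm\xbfm\,\drm t + \int_0^T a_s(t;w_s,\varphi)\,\drm t = G_s(\varphi),\qquad G_s(\varphi):=-\tfrac1s\int_0^T\!\big(a_s(t;v,\varphi)-a_0(t;v,\varphi)\big)\,\drm t,
\]
where $G_s$ collects the difference quotient of the explicit coefficient dependence.

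The core computation is then to identify $\lim_{s\to0}G_s=G$. Expanding $(\on{D}(\Ibfm+s\Zbfm))^{-\intercal}=\Ibfm-s\,\on{D}\Zbfm^\intercal+\mathcal{O}(s^2)$, $\det=1+s\,\div\Zbfm+\mathcal{O}(s^2)$, $\nabla\det=s\,\nabla(\div\Zbfm)+\mathcal{O}(s^2)$ and $\partial_t(\Ibfm+s\Zbfm)=s\,\partial_t\Zbfm$, and differentiating the three terms of $a_s$ at $s=0$, the leading contributions reproduce exactly the three terms of \eqref{eq.rhs_material_der}: the first term of $a_s$ yields $\big((\on{D}\Zbfm+\on{D}\Zbfm^\intercal)\nabla v\big)\cdot\nabla\varphi$ after using the transpose identity $(\on{D}\Zbfm^\intercal a)\cdot b = a\cdot(\on{D}\Zbfm\,b)$, the second yields $(\partial_t\Zbfm)\cdot\nabla v\,\varphi$, and the third yields $\varphi\,\nabla(\div\Zbfm)\cdot\nabla v$. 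Passing to the limit in the difference-quotient equation, the $a_s$-term tends to $a_0(\dot v,\varphi)=\int_{\Omega_t}\nabla\dot v\cdot\nabla\varphi$ and the time term to $\int\partial_t\dot v\,\varphi$, together giving $S(\dot v,\varphi)=G(\varphi)$, which is \eqref{eq.cat_weak_formulation_pure_dirichlet_material}.

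Finally, for existence and regularity, I would observe that $G$ is a bounded linear functional on $\tilde{H}^{1,\frac12}_0(Q_T)$ — each term of \eqref{eq.rhs_material_der} is controlled by $\|v\|_{H^{1,\frac12}}\,\|\varphi\|_{H^{1,\frac12}}$ via the uniform bounds of Remark~\ref{rem.singular_val} and $\Zbfm\in C^2$ — so that the well-posedness of Theorem~\ref{thm.solution_operator_isomorphism} furnishes a unique $\dot v\in\hat{H}^{1,\frac12}_0(Q_T)$ with $S(\dot v,\cdot)=G$. The main obstacle, and the step deserving the most care, is to promote the formal limit to genuine convergence $w_s\to\dot v$ in $\hat{H}^{1,\frac12}_0(Q_T)$: this rests on uniform-in-$s$ boundedness and coercivity of $a_s$ (inherited from the coercivity argument in the proof of Theorem~\ref{thm.solution_operator_isomorphism}, valid since $\Ibfm+s\Zbfm$ obeys the uniformity condition for small $s$), together with the convergence $G_s\to G$ in the dual norm; linearity and well-posedness then yield convergence of $w_s$ and identify its limit with $\dot v$.
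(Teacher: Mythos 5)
Your proposal is correct and follows the same overall strategy as the paper (pull the perturbed weak form back to $Q_T$ via Lemma~\ref{lem.transport_eq_back_0} with $\xibfm=\mathbf{I}+s\Zbfm$, form the difference quotient, expand the coefficients in $s$), and your Taylor expansions of $(\on{D}(\mathbf{I}+s\Zbfm))^{-\intercal}$, $\det(\on{D}(\mathbf{I}+s\Zbfm))$ and $\partial_t(\mathbf{I}+s\Zbfm)$ reproduce all three terms of \eqref{eq.rhs_material_der} with the right signs. Where you genuinely diverge from the paper is in the algebraic split of the difference-quotient identity. You keep the $s$-dependent form $a_s$ acting on $w_s=(v^{t,s}-v)/s$ on the left and define $G_s(\varphi)=-\frac1s\int_0^T(a_s(t;v,\varphi)-a_0(t;v,\varphi))\,\drm t$, which involves only the \emph{unperturbed} $v$; the paper instead writes $\frac1s S(v^{t,s}-v,\varphi)=G_s(\varphi)$ with the \emph{fixed} operator $S$ on the left and a $G_s$ that contains $v^{t,s}$. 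The trade-off is real: your version makes $G_s\to G$ immediate from uniform convergence of the coefficient difference quotients (no need for the paper's appeal to the Ladyzhenskaya convergence theorem to get $v^{t,s}\to v$ in $H^{1,0}(Q_T)$), but it shifts the burden to the limit on the left-hand side, where you must establish uniform (in $s$) invertibility of the operators induced by $\partial_t+a_s$ and operator-norm convergence $a_s\to a_0$ so that $w_s=A_s^{-1}G_s\to A_0^{-1}G=\dot v$; you correctly identify this as the delicate step and the uniform coercivity inherited from the proof of Theorem~\ref{thm.solution_operator_isomorphism} as the ingredient that makes it work. The paper's split lets it conclude in one line from the isomorphism property of the fixed $S$ (Theorem~\ref{thm.solution_operator_isomorphism}), at the price of the auxiliary convergence $v^{t,s}\to v$. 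Both routes are sound; if you flesh out the uniform-invertibility argument (a Neumann-series perturbation of $A_0^{-1}$ suffices once $\|A_s-A_0\|\to0$), your proof is complete.
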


\begin{proof}%[Proof of Lemma \ref{lem.material_derivative}]
Let $v_{t,s}$ be the solution of the perturbed problem on $Q_T^s$,
satisfying the same boundary conditions as stated in \eqref{eq.cat_diff_eq}. 
As an immediate consequence of \cite[Chapter IV, Theorem 9.1]{Lady}, 
the solution $v_{t,s}$ lies in $\hat{H}^{2,1}(Q_T^s)$ under our smoothness 
assumptions. Notice that the increased regularity of the solution of the differential 
equation is needed for the boundary condition of the local shape 
derivative \eqref{eq.local_shape_der}. 

We have for the perturbed bilinear form
\begin{equation}\label{eq:perturbedBLF}
%===================================
S_s(v_{t,s}, \varphi) := \int_0^T \int_{\Omega_{t,s}} \{\partial_t v_{t,s} \varphi
	+ \nabla v_{t,s} \cdot \nabla \varphi \} \, \drm \xbfm \drm t, 
\end{equation}
that $S_s(v_{t,s}, \varphi ) = 0$ for all $\varphi \in \tilde{H}_0^{1, \frac{1}{2}}
(Q_T^s)$. The existence and uniqueness of a solution follows as in Theorem 
\ref{thm.solution_operator_isomorphism} by using that the transformation 
$\kabfm + s \Zbfm \circ \kabfm$ satisfies again a uniformity condition as 
stated in \eqref{eq.uniformity_cond}. With similar computations as in the 
proof of Lemma \ref{lem.transport_eq_back_0}, when setting 
$\xibfm = \mathbf{I} + s \Zbfm$, $\Omega_{\tau} = \Omega_t$ 
and $\Omega_{\varsigma} = \Omega_{t,s}$,  the transformation 
of the integral in \eqref{eq:perturbedBLF} back onto $\Omega_t$ reads
%\begin{align*}
%S_s(v_{t,s}, \varphi) &= \int_0^{\tau} \int_{\Omega_t}
%	\det\big(\on{D} (\mathbf{I} + s \Zbfm)\big)
%		\big\{[(\partial_t v_{t,s}) \circ (\mathbf{I} + s \Zbfm)]
%		 	[\varphi^s \circ (\mathbf{I} + s \Zbfm)] \\
%   &\hspace*{4cm}+ (\nabla v_{t,s})\circ(\mathbf{I} + s \Zbfm)
%		\cdot(\nabla \varphi^s)\circ(\mathbf{I} + s \Zbfm)\big\}\mathrm{d}\mathbf{x} \mathrm{d} t.
%\end{align*}
%
%We can use the identities \eqref{eq.id_stefan1} and \eqref{eq.id_stefan2} when replacing $\boldsymbol{\kappa}$ by $\mathbf{I} + s \Zbfm$. 
%With their help, we arrive at
\begin{align*}
 S_s(v_{t,s}, \varphi) &= \int_0^T \int_{\Omega_t} \det\big(\on{D}(\mathbf{I} + s \Zbfm)\big)   
  	\Big[\big\{\partial_t v^{t,s} - \big(\on{D}(\mathbf{I} + s \Zbfm)\big)^{-\intercal} 
		\nabla v^{t,s}  \cdot \partial_t (\mathbf{I} + s \Zbfm)\big\}\varphi^s\\
  &\hspace*{3cm} + \big(\on{D}(\mathbf{I} + s \Zbfm)\big)^{-\intercal} 
  	\nabla v^{t,s} \cdot \big(\on{D}(\mathbf{I} + s \Zbfm)\big)^{-\intercal} 
		\nabla \varphi^s\Big]\, \drm \xbfm \drm t ,
\end{align*}
where we have set $v^{t,s} := v_{t,s} \circ (\mathbf{I} + s \Zbfm)$ 
and $\varphi^s$ analogously. We define this bilinear form on the unperturbed domain as
\begin{equation*}
\begin{aligned}
 S^s(w,\varphi) & := \int_0^T \int_{\Omega_t}\det\big(\on{D}(\mathbf{I} + s \Zbfm)\big) \\
 & \left[{\bf B}^s \nabla w \cdot \nabla \varphi + \partial_t w \varphi 
 	- \big(\on{D}(\mathbf{I} + s \Zbfm)\big)^{-\intercal} \nabla w  
		\cdot \partial_t  (\mathbf{I} + s \Zbfm) \varphi \right] \, \drm \xbfm \drm t,
\end{aligned}
\end{equation*}
where 
\[
  {\bf B}^s = \big(\on{D}(\mathbf{I} + s \Zbfm)\big)^{-1}
  		\big(\on{D}(\mathbf{I} + s \Zbfm)\big)^{-\intercal}. 
\]  
Note that the last term in the definition of $S^s(w,\varphi)$ is new 
in comparison with \cite{Chapko_et_al}. 

We conclude the following statement: 
\[ 
S_s(v_{t,s}, \varphi) =0\ \text{for all}\ 
	\varphi \in \tilde{H}_0^{1,\frac{1}{2}} (Q_T^s) 
\]
for $v_{t,s}\in\hat{H}^{2,1}(Q_T^s)$ is equivalent to
\begin{equation}\label{eq.cat_local_dirichlet_transformed_back}
%=========================================================
 S^s(v^{t,s},\varphi) = 0\ \text{for all}\ 
 	\varphi \in \tilde{H}_0^{1, \frac{1}{2}}(Q_T)
\end{equation}
for $v^{t,s}\in\hat{H}^{2,1} (Q_T)$. Integration by parts, 
where we use the zero boundary values of the test function, 
and dividing by $\det\big(\on{D}(\mathbf{I} + s \Zbfm)\big)$
verifies that \eqref{eq.cat_local_dirichlet_transformed_back} 
is equivalent to the formulation 
\begin{equation}\label{eq.cat_strong_local_dirchlet}
%=========================================================
\begin{aligned}
&\partial_t v^{t,s} - \big(\on{D}(\mathbf{I} + s \Zbfm)\big)^{-\intercal}
	\nabla v^{t,s} \cdot \partial_t (\mathbf{I} + s \Zbfm) \\
&\qquad- \frac{1}{\det(\on{D}(\mathbf{I} + s \Zbfm))} \nabla\Big(\det\big(\on{D}(\mathbf{I} + s \Zbfm)\big)\Big) 
	\cdot {\bf B}^s \nabla v^{t,s}- \on{div}({\bf B}^s \nabla v^{t,s}) = 0 \\
&\hspace*{8cm}\text{in}\ \bigcup_{0<t<T} (\{t\}\times\Omega_t).
\end{aligned}
\end{equation}
Because of $S(v, \varphi) = 0 $ and $S^s(v^{t,s}, \varphi) =0$, it holds
\[
S (v^{t,s}-v, \varphi) = - S^s(v^{t,s}, \varphi ) + S(v^{t,s}, \varphi). 
\]
We can therefore consider
\[ \frac{1}{s} S (v^{t,s}-v, \varphi) = G_s(\varphi)
	\ \text{for all}\ \varphi \in \tilde{H}_0^{1, \frac{1}{2}} (Q_T)
\]
for the computation of the material derivative, where
\begin{align*}
G_s(\varphi) & = \frac{1}{s} \int_0^T \int_{\Omega_t} \Big\{
	-\det\big(\on{D}(\mathbf{I} + s \Zbfm)\big){\bf B}^s \nabla v^{t,s} \cdot \nabla \varphi 
		- \det\big(\on{D}(\mathbf{I} + s \Zbfm)\big)\partial_t v^{t,s} \varphi  \\
& + \det\big(\on{D}(\mathbf{I} + s \Zbfm)\big)\big(\on{D}(\mathbf{I} + s \Zbfm)\big)^{-\intercal} 
	\nabla v^{t,s} \cdot \partial_t (\mathbf{I} + s \Zbfm) \varphi  \\
& + \partial_t v^{t,s} \varphi + \nabla v^{t,s} \cdot \nabla \varphi\Big\}\, \drm \xbfm \drm t.
\end{align*}
Herein, the second line is new in comparison with \cite{Chapko_et_al}. 

We reformulate the expression for $G_s(\varphi)$ 
the same way as in \cite{Chapko_et_al} and we arrive at
\begin{align*}
G_s(\varphi) &= \frac{1}{s}\int_0^T \int_{\Omega_t}\bigg\{[{\bf I} - {\bf B}^s] \nabla v^{t,s} \cdot \nabla \varphi \\
	&\hspace*{2cm}+ \frac{\varphi}{\det\big(\on{D}(\mathbf{I} + s\Zbfm)\big)}
		\nabla\Big(\det\big(\on{D}(\mathbf{I} + s\Zbfm)\big)\Big) 
		\cdot {\bf B}^s \nabla v^{t,s}\bigg\}\,\drm \xbfm \drm t \\
& + \frac{1}{s}\int_0^T \int_{\Omega_t} \Big\{\det\big(\on{D}(\mathbf{I} + s\Zbfm)\big)
	\big(\nabla v^{t,s}\big)^{\intercal} 
		\big(\on{D}(\mathbf{I} + s \Zbfm)\big)^{-1} \partial_t (\mathbf{I} + s \Zbfm) \varphi\Big\}\, \drm \xbfm \drm t,
\end{align*}
where the last line is new in this time-dependent setting 
in comparison with the proof given in \cite{Chapko_et_al}.
We now need to show that $G_s$ converges to $G$ stated in \eqref{eq.rhs_material_der}.

Clearly, $\varphi \mapsto G_s(\varphi)$ is a bounded 
linear functional on $\tilde{H}_0 ^{1, \frac{1}{2}} (Q_T)$, i.e.\ 
$G_s\in\Big(\tilde{H}_0^{1,\frac{1}{2}}(Q_T)\Big)'$. 
Therefore, we can interchange the integration and the limes
$s\to 0$. Especially, as in \cite{Chapko_et_al}, we have 
\[ 
  \frac{1}{s}({\bf I}-{\bf B}^s)\to\on{D}\Zbfm+\on{D}\Zbfm^{\intercal} 
\]
and
\[ 
  \frac{1}{s\det\big(\on{D}(\mathbf{I} + s\Zbfm)\big)}
		\nabla\Big(\det\big(\on{D}(\mathbf{I} + s\Zbfm)\big)\Big)  \to \nabla \on{div} \Zbfm 
\]
as $s \to 0$. Thus, it remains to compute
\[
  \lim_{s \to 0} \frac{1}{s}\big(\on{D}(\mathbf{I} + s \Zbfm)\big)^{-1}\partial_t(\mathbf{I}+s \Zbfm). 
\]
By using the Neumann series, we have
\[
\big(\on{D}(\mathbf{I}+s \Zbfm)\big)^{-1} = \mathbf{I} - s\on{D}\Zbfm + o(s) 
\]
and therefore 
\[ 
  \lim_{s \to 0} \frac{1}{s}\big(\on{D}(\mathbf{I} + s \Zbfm)\big)^{-1}\partial_t(\mathbf{I}+s \Zbfm)
  	= \lim_{s \to 0}\frac{1}{s}\big(\mathbf{I}-s\on{D}\Zbfm + o(s)\big)s\partial_t \Zbfm  =  \partial_t \Zbfm.
\]

In order to conclude the convergence $G_s\to G$ as $s\to 0$, we need that 
$v^{t,s}$ converges to $v$ in $H^{1,0}(Q_T)$. To this end, we transform the 
equations for $v$ and for $v^{t,s}$ to $Q_0$ by using the transformation 
$\kabfm$, yielding two differential equations similar to 
\eqref{eq.cat_strong_local_dirchlet}. Applying \cite[Theorem 4.5 on pg.~166]{Lady} 
implies the convergence of $v_{t,s} \circ (\mathbf{I} + s \Zbfm) \circ \kabfm$ to 
$v  \circ \kabfm$ and thus, with the uniformity condition \eqref{eq.uniformity_cond},
also $v^{t,s}$ converges to $v$. Therefore, we have convergence of $G_s \to G$ 
as $s \to 0$ in the dual space of $\tilde{H}^{1,\frac{1}{2}}_0 (Q_T)$ as in 
\cite{Chapko_et_al}, with $G(\varphi)$ as in \eqref{eq.rhs_material_der}.

Now, we can argue as in \cite{Chapko_et_al}: Since the solution 
operator is an isomorphism from $\hat{H}^{-1,-\frac{1}{2}}(Q_T)$ 
to $\hat{H}^{1,\frac{1}{2}}_0 (Q_T)$ (see Theorem 
\ref{thm.solution_operator_isomorphism}), the statement in 
Lemma \ref{lem.material_derivative} is true.
\end{proof}

Having the material derivative for \eqref{eq.cat_diff_eq}
at hand, we are finally in the position to prove the local
shape derivative posed in Theorem \ref{thm.local_shape_der}.

\begin{proof}[Proof of Theorem \ref{thm.local_shape_der}] 
Starting from the material derivative, we would like to 
compute the local shape derivative $\delta v$. 

If we consider $v \in \hat{H}^{2,1}(Q_T)$, we also have $\nabla v \in H^{1, \frac{1}{2}} (Q_T)$ and 
$\Delta v \in L^2(Q_T)$, as in \cite{Chapko_et_al}. This follows from $\kabfm$ 
being a diffeomorphism and from the time-independent case in \cite[Proposition 2.3 on pg.~14 with $r=2$, 
$s=1$, $j=2$ and $k = 0$]{lions2}. Then for $\varphi \in V(Q_T)$ (see \eqref{eq.space_V}
for the definition of the space), we have the same identity as in 
\cite[pg.~859]{Chapko_et_al}, namely
\begin{align*}
(\on{D}\Zbfm + \on{D}\Zbfm^{\intercal}) \nabla v \cdot \nabla \varphi 
	+ \varphi \nabla(\on{div} \Zbfm) \cdot \nabla v 
  &= \on{div} \big( \on{div}(\varphi \Zbfm) \nabla v 
  	- (\nabla v \cdot \nabla \varphi) \Zbfm \big) \\
  & \qquad + \nabla(\Zbfm \cdot \nabla v) \cdot \nabla \varphi 
  	- \on{div}(\varphi \Zbfm) \Delta v.
\end{align*}
Applying this identity and the divergence theorem to \eqref{eq.rhs_material_der} yields
\[
G(\varphi) = \int_0^T \int_{\Omega_t}\big\{\nabla (\Zbfm \cdot \nabla v) \cdot \nabla \varphi 
  - \on{div} (\varphi \Zbfm) \underbrace{\Delta v}_{= \partial_t v}
  + \nabla v \cdot \partial_t \Zbfm \varphi\big\}\, \drm \xbfm \drm t,
\]
where the boundary terms vanish due to the compact support of $\varphi$.
Note 
that only the last term of the integrand differs from the computations 
in \cite{Chapko_et_al}. It holds
\[ - \partial_t v \on{div}(\Zbfm \varphi) = - \on{div} (\partial_t v \Zbfm \varphi) 
	+ \Zbfm \varphi \cdot \nabla(\partial_t v) \]
and, therefore, we can apply the divergence theorem 
again to get
\[
G(\varphi) =  \int_0^T \int_{\Omega_t } \big\{\nabla (\Zbfm \cdot \nabla v) \cdot \nabla \varphi  
	+ \Zbfm \varphi \nabla (\partial_t v)
	+ \nabla v \cdot \partial_t \Zbfm \varphi\big\}\, \drm \xbfm \drm t.
\]
Taking the two time derivatives together yields
\[
  G(\varphi) =  \int_0^T \int_{\Omega_t }
	\big\{\partial_t (\nabla v \cdot \Zbfm) \varphi
		+ \nabla (\Zbfm \cdot \nabla v) \cdot \nabla \varphi\big\}\drm \xbfm \drm t.
\]
This is the same expression as in \cite{Chapko_et_al}. Thus, the 
local shape derivative satisfies the same partial differential equation as 
in \cite{Chapko_et_al} except for being in a space-time tube $Q_T$ 
instead a space-time cylinder $Q_0$.
\end{proof}

\bibliographystyle{plain} %unsrt
\bibliography{literatur}

\begin{thebibliography}{10}

\bibitem{Chapko_et_al}
R.~Chapko, R.~Kress, and J.-R. Yoon.
\newblock On the numerical solution of an inverse boundary value problem for
  the heat equation.
\newblock {\em Inverse Problems}, 14(4):853--867, 1998.

\bibitem{chapko_neumann}
R.~Chapko, R.~Kress, and J.-R. Yoon.
\newblock An inverse boundary value problem for the heat equation: the neumann
  condition.
\newblock {\em Inverse problems}, 15(4):1033, 1999.

\bibitem{Costabel}
M.~Costabel.
\newblock Boundary integral operators for the heat equation.
\newblock {\em Integral Equations and Operator Theory}, 13(4):498--552, 1990.

\bibitem{Delfour_Zolesio}
M.C. {D}elfour and J.-P. {Zol\'{e}sio}.
\newblock {\em Shapes and Geometries: Metric, Analysis, Differential Calculus,
  and Optimization}.
\newblock Advances in Design and Control, SIAM, USA, second edition, 2011.

\bibitem{Dennis_Schnabel}
J.E. {Dennis} and R.B. {Schnabel}.
\newblock {\em Numerical Methods for Nonlinear Equations and Unconstrained
  Optimization Techniques}.
\newblock Prentice-Hall, Englewood Cliffs, 1983.

\bibitem{Dziri_dynamical}
R.~Dziri and J.-P. Zol\'{e}sio.
\newblock Dynamical shape control in non-cylindrical {N}avier-{S}tokes
  equations.
\newblock {\em Journal of Convex Analysis}, 6(2):293--318, 1999.

\bibitem{Dziri}
R.~Dziri and J.-P. Zol\'{e}sio.
\newblock Eulerian derivative for non-cylindrical functionals.
\newblock In M.~P.~{Polis} J.~{Cagol} and J.-P. {Zol\'{e}sio}, editors, {\em
  Shape optimization and optimal design}, pages 87--107. Lecture notes in pure
  and applied mathematics, Marcel Dekker, Inc., New York, Basel, 2001.

\bibitem{Fletcher}
R.~{Fletcher}.
\newblock {\em Practical Methods for Optimization}.
\newblock Wiley, New York, 1980.

\bibitem{Geiger_Kanzow_unres}
C.~{Geiger} and C.~{Kanzow}.
\newblock {\em Numerische Verfahren zur Lösung unrestringierter
  Optimierungsaufgaben}.
\newblock Springer, Berlin-Heidelberg, 1999.

\bibitem{Gurtin}
M.E. {Gurtin}.
\newblock {\em An Introduction to Continuum Mechanics}.
\newblock Academic Press, INC, New York, 1981.

\bibitem{harbrecht_domain_map}
H.~Harbrecht, M.~Peters, and M.~Siebenmorgen.
\newblock Analysis of the domain mapping method for elliptic diffusion problems
  on random domains.
\newblock {\em Numerische Mathematik}, 134(4):823--856, 2016.

\bibitem{HHTauschCat}
H.~Harbrecht and J.~Tausch.
\newblock On the numerical solution of a shape optimization problem for the
  heat equation.
\newblock {\em SIAM J. Sci. Comput.}, 35(1):A104--A121, 2013.

\bibitem{Lady}
O.A. Ladyzenskaja, V.A. Solonnikov, and N.N. Ural'Ceva.
\newblock {\em Linear and Quasilinear Equations of Parabolic Type (Providence,
  RI: American Mathematical Society)}.
\newblock American Mathematical Society, Rhode Island, 1968.

\bibitem{lions2}
J.L. Lions and E.~Magenes.
\newblock {\em Probl{\`e}mes aux limites non homog{\`e}nes et applications},
  volume~2 of {\em Travaux et recherches math{\'e}matiques}.
\newblock Dunod, Paris, 1968.

\bibitem{lions_magenes_v1}
J.L. Lions and E.~Magenes.
\newblock {\em Non-Homogeneous Boundary Value Problems and Applications I}.
\newblock Springer-Verlag, Berlin, G{\"o}ttingen, Heidelberg, 1972.

\bibitem{lions_magenes_v2}
J.L. Lions and E.~Magenes.
\newblock {\em Non-Homogeneous Boundary Value Problems and Applications II}.
\newblock Springer-Verlag, Berlin, G{\"o}ttingen, Heidelberg, 1972.

\bibitem{McLean}
W.~{McLean}.
\newblock {\em Strongly elliptic systems and boundary integral equations}.
\newblock Cambridge University Press, Cambridge, 2000.

\bibitem{Moubachir}
M.~{Moubachir} and J.-P. {Zol\'{e}sio}.
\newblock {\em Moving Shape Analysis and Control}.
\newblock Chapman \& Hall /CRC, Tayler \& Francis Group, USA, 2006.

\bibitem{Nocedal_BFGS}
J.~Nocedal and S.T. Wright.
\newblock {\em Numerical Optimization}.
\newblock Springer Science+Business Media, LLC, second edition, 2006.

\bibitem{numerical_recipes}
W.H. Press, S.A. Teukolsky, W.T. Vetterling, and B.P. Flannery.
\newblock Numerical recipes in fortran 77, vol. 1.
\newblock {\em New York, NY: Press Syndicate of the University of Cambridge,
  Cambridge}, 1992.

\bibitem{sokolowski1988shape}
J.~Sokolowski.
\newblock Shape sensitivity analysis of boundary optimal control problems for
  parabolic systems.
\newblock {\em SIAM journal on control and optimization}, 26(4):763--787, 1988.

\bibitem{Soko_Zolesio}
J.~{Sokolowski} and J.-P. {Zol\'{e}sio}.
\newblock {\em Introduction to Shape Optimization}.
\newblock Springer, Berlin-Heidelberg, 1992.

\bibitem{spurk}
J.H. Spurk and N.~Aksel.
\newblock {\em Fluid Mechanics}.
\newblock Springer-Verlag, Berlin Heidelberg, 2 edition, 2008.

\bibitem{tausch09a}
J.~Tausch.
\newblock Nystrom discretization of parabolic boundary integral equations.
\newblock {\em Appl. Numer. Math.}, 59(11):2843--2856, 2009.

\bibitem{tausch18}
J.~Tausch.
\newblock Nyström method for {BEM} of the heat equation with moving
  boundaries.
\newblock Tech Report, Southern Methodist University, 2018.

\bibitem{El_Yacoubi}
S.~El Yacoubi and J.~Sokolowski.
\newblock Domain optimization problems for parabolic control systems.
\newblock {\em Applied Mathematics and Computer Science}, 6:277--290, 1996.

\bibitem{zolesio_identification}
J.-P. Zol\'{e}sio.
\newblock {\em Identification de domaines par d{\'e}formations}.
\newblock PhD thesis, Universit\'{e} de Nice, 1979.

\end{thebibliography}
\end{document}